\author[ W. Clark, A. Bloch, L. Colombo]{William Clark, Anthony Bloch, Leonardo Colombo}
\newtheorem{theorem}{Theorem}[section]
\newtheorem{lemma}[theorem]{Lemma}
\newtheorem{proposition}[theorem]{Proposition}
\newtheorem{corollary}[theorem]{Corollary}
\newtheorem{example}{Example}[section]
\newtheorem{remark}[theorem]{Remark}
\def\spoke#1#2{
\begin{scope}[shift={#1}, rotate=#2]
    \draw[thick] (0,-.15) -- (0,-1);
\end{scope}
}
\theoremstyle{definition}
\newtheorem{definition}[theorem]{Definition}
\theoremstyle{proposition}
\theoremstyle{conjecture}
\theoremstyle{claim}
\definecolor{mygreen}{rgb}{0,0.6,0}
\definecolor{mygray}{rgb}{0.5,0.5,0.5}
\definecolor{mymauve}{rgb}{0.58,0,0.82}
\definecolor{light-gray}{gray}{0.95}
\newcounter{casenum}
\newenvironment{caseof}{\setcounter{casenum}{1}}{\vskip.5\baselineskip}
\newcommand{\case}[2]{\vskip.5\baselineskip\par\noindent {\bfseries Case \arabic{casenum}:} #1\\#2\addtocounter{casenum}{1}}
\newcommand{\fix}{\text{fix}}
\newcommand{\N}[1]{\mathcal{N}_{f}(#1)}
\newcommand{\X}{\mathcal{X}}
\title{A Poincar\'e-Bendixson Theorem for Hybrid Systems}
\address{William Clark: Department of Mathematics, University of Michigan. 530 Church Street, Ann Arbor, MI, USA.} \email{wiclark@umich.edu}
\address{Anthony Bloch: Department of Mathematics, University of Michigan. 530 Church Street, Ann Arbor, MI, USA.} \email{abloch@umich.edu}
\address{Leonardo Colombo: Department of Automatic Control, School of Electrical Engineering and Computer Sciences, KTH Royal Institute of Technology, SE-100  44,  Stockholm,  Sweden.} \email{colombo2@kth.se}
\begin{document}

%\thanks{$*$ corresponding author}
\thanks{A. Bloch was supported by NSF grant DMS-1613819 and AFOSR. W. Clark was supported by NSF grant DMS-1613819 and L. Colombo was supported by MINECO (Spain) grant MTM2016-76072.}
\thanks{We thank Dr. Jessy Grizzle for valuable feedback and discussion on this paper.}
\keywords{Poincar\'e-Bendixson theoren, hybrid systems, Poincar\'e map, hybrid flows, stability periodic orbits.}

\subjclass[2010]{34A38, 34C25, 34D20, 70K05, 70K20, 70K42.}

\maketitle
\begin{abstract} The Poincar\'e-Bendixson theorem plays an important role in the study of
the qualitative behavior of dynamical
systems on the plane; it describes the structure of limit sets in such
systems. We prove  a version of the Poincar\'e-Bendixson Theorem for two dimensional hybrid dynamical systems and describe a method for computing the derivative of the Poincar\'e return map, a useful object for the  stability analysis of hybrid systems. We also prove a Poincar\'e-Bendixson Theorem for a class of one dimensional hybrid dynamical systems.
\end{abstract}
\section{Introduction}
%\begin{itemize}
%	\item A way to establish that closed orbits exist for HDS.
%	%\item Applications, try $\{Y|\omega(Y)=\gamma\}\backslash\gamma$ is open.
%	%\item $K$ f. inv. cmpt has either an equil pt or a cycle.
%	\item Applications of this to walking.
%\end{itemize}

H. Poincar\'e considered the problem of characterizing the structure of limit sets of trajectories of analytic vector fields on the plane in 1886 \cite{poincare}.
%The problem of characterizing the structure of limit sets of trajectories of analytical  vector fields on the plane was studied by H. Poincar\'e in 1886 \cite{poincare}. 
I. Bendixson improved the solution proposed by Poincar\'e in 1903 by solving the problem under the weaker hypothesis of $C^1$ vector fields \cite{bendixson}. 
%The solution proposed by Poincar\'e was improved in 1903 by I. Bendixson \cite{bendixson}, who solved the problem under the weaker hypothesis of $C^1$ vector fields. 
Since then, the investigation of the asymptotic behavior of dynamical systems has been essential to understanding their behavior. The theory of Poincar\'e-Bendixson studies so-called \textit{limit sets}. The classic version of the Poincar\'e-Bendixson Theorem states that if a trajectory is bounded and its limit set does not contain any fixed points, then the limt set is a periodic orbit  \cite{perko1991differential}. Therefore, the problem of determining the existence of limit cycles in planar continuous dynamics is well understood.

Hybrid systems \cite{teelSurvey} are non-smooth dynamical systems which exhibit a combination of smooth and discrete dynamics, where the flow evolves continuously on a state space, and a discrete transition occurs when the flow intersects a co-dimension one hypersurface.
%\tb{where the transition from the smooth to the discrete dynamics takes place in a co-dimensional one hypersurface of the state space for the continuous flow.}  
Due to many engineering applications, such as dynamical walking of bipedal robots \cite{collins}, \cite{grizzle}, \cite{grizzle2}, there has been an increased interest in recent years in studying the existance and stability of limit cycles in hybrid systems \cite{Lou2015OnRS}, \cite{Lou2015ResultsOS}, \cite{Lou2017ExistenceOH}.
%In recent years there has been increased interest in studying the existance and stability of limit cycles in hybrid systems \cite{Lou2015OnRS}, \cite{Lou2015ResultsOS}, \cite{Lou2017ExistenceOH}, due to many engineering applications such as dynamical walking of bipedal robots \cite{collins}, \cite{grizzle}, \cite{grizzle2}. 
There have also been several attempts at building a foundational qualitative theory for hybrid systems (see \cite{Matveev:2000:QTH:555969}, \cite{SIMIC2002197} and references therein)
where early versions of the Poincar\'e-Bendixson Theorem were developed. However, many fundamental questions solved for continuous-time systems still remain open for hybrid systems. The results in \cite{Matveev:2000:QTH:555969} are restricted to the situation of constant vector fields while in \cite{SIMIC2002197}, the authors considered a
particular class of systems with much stricter assumptions to ensure the existence of periodic orbits.
%\tb{particular class of regular systems without branching, and no proof of the results were provided.}

The Poincar\'e map, or first return map, is a method of studying the stability of limit cycles by reducing the dimension of the dynamics for a continuous-time dynamical system by one, and considering this as a discrete-time system \cite{guckenheimer2002nonlinear}, \cite{perko1991differential}, \cite{strogatz2014nonlinear}. 
The map is constructed as follows: if $\gamma$ is a periodic orbit that intersects a hypersurface (the Poincar\'e section) transversely at a point $x_0$, then for a point $x$ near $x_0$, the solution through $x$ will cross the hypersurface again at a point $P(x)$ near $x_0$. The mapping $x\mapsto P(x)$ is called the Poincar\'e map. In practice, it is impossible to find the Poincar\'e map analytically and in closed form due to the fact that it requires the solution to the differential equation.
%\tb{It is constructed by the intersection of an orbit in the state space of a continuous-time dynamical system with a lower-dimensional subspace, the Poincar\'e section, transverse to the flow of the system in order that periodic orbits starting on the subspace flow through it and not parallel to it. In practice this is not always feasible, since there is no general method to analytically construct a Poincar\'e map.} 
The extension of the continuous-time Poincar\'e map for mechanical systems with impulsive effects was considered in \cite{grizzle}. The hybrid Poincar\'e map is important in applications as it is used to ensure the existence and stability properties of periodic locomotion gaits \cite{siamreview}. 

In this work, we study the problem of existence and stability of periodic orbits for hybrid dynamical systems, in addition to other results concerning the qualitative behavior of these systems.
%\tb{we show several results and properties concerning the qualitative behavior of these systems.} 
Among these results, we present a version of the Poincar\'e-Bendixson Theorem for two dimensional hybrid systems under weaker conditions than the ones considered in \cite{Matveev:2000:QTH:555969}, \cite{SIMIC2002197}. We also derive an analytical method to compute the derivative of the hybrid Poincar\'e map to characterize the stability of periodic orbits. We apply our results to find a region in parameter space where one can ensure the existence of limit cycles for the rimless wheel, a popular system in locomotion research used to study essential properties of walking robots. We additionally prove a Poincar\'e-Bendixson Theorem for a general class of one dimensional hybrid dynamical systems.
%There exist versions of the Poincar\'e-Bendixson Theorem for hybrid dynamical systems \cite{Matveev:2000:QTH:555969}, \cite{SIMIC2002197}. %In this paper, we present our version of hybrid Poincar\'e-Bendixson and offer a way to compute the stability of hybrid limit cycles.

The paper is organized as follows: Section \ref{sec:hybrid} introduces the formulation of hybrid dynamical systems as well as hybrid $\omega$-limit sets. Section \ref{sec:limit} contains two properties of the hybrid $\omega$-limit set that are needed to properly formulate our hybrid Poincar\'e-Bendixson theorem.  Section \ref{sec:poincare} introduces the hybrid Poincar\'e map and uses this to prove our hybrid Poincar\'e-Bendixson Theorem (Theorem \ref{th:HPB}). Section \ref{sec:stability} offers an analytic way to compute the derivative of the hybrid Poincar\'e map, then uses this result  to study the stability of planar hybrid limit cycles. Section \ref{sec:1dim} contains a version of the Poincar\'e-Bendixson theorem for a general class of one dimensional hybrid dynamical systems. The paper ends with an application of the main theorem to find conditions for stability of periodic walking of the rimless wheel. 
Appendix \ref{appendix} contains some analogue results of this work for time-continuous flows.
%Some analogue results of this work for time-continuous flows are summarized in the appendix.% \ref{appendix}.
% Section \ref{sec:preliminaries} reviews some important results from both discrete and continuous dynamics. Additionally, a few classical theorems about continuous systems are presented that we will prove analogous versions for HDS. 
%%%%%%%%%%%%%%%%%%%%%%%%%%%%%%%%%%%%
%%%%%%%%%%%%%%%%%%%%%%%%%%%%%%%%%%%%
%                             SECTION 1
%%%%%%%%%%%%%%%%%%%%%%%%%%%%%%%%%%%%
%%%%%%%%%%%%%%%%%%%%%%%%%%%%%%%%%%%%

\section{Hybrid Dynamical Systems}\label{sec:hybrid}
Hybrid dynamical systems (HDS) are dynamical systems characterized by their mixed behavior of continuous and discrete dynamics where the transition is determined by the time when the continuous flow switches from the ambient space to a co-dimensional one submanifold. This class of dynamical systems is given by an $4$-tuple, $(\X,S,f,\Delta)$. The pair ($\X$,$f$) describes the continuous dynamics as

\begin{equation*}
\dot{x}(t) = f(x(t))
\end{equation*} where $\X$ is a smooth manifold and $f$ a $C^1$ vector field on $\X$ with flow $\varphi_t:\X\rightarrow\X$. Additionally, ($S$,$\Delta$) describes the discrete dynamics as 
$x^{+}=\Delta(x^{-})$ where $S\subset\X$ is a smooth submanifold of co-dimension one called the \textit{impact surface}.%, and $f:\X\rightarrow T\X$, $\Delta:S\rightarrow\X$ is a $C^1$ function}. 

The hybrid dynamical system describing the combination of both dynamics is given by
\begin{equation}\label{sigma}\Sigma: \begin{cases}
\dot{x} = f(x),& x\not\in S\\
x^+ = \Delta(x^-),& x^-\in S.
\end{cases}\end{equation}
The flow of the hybrid dynamical system \eqref{sigma} is denoted by $\varphi_t^H$. This may cause a little confusion around the break points, that is, where $\varphi_{t_0}(x)\in S$. Then, is $\varphi_{t_0}^H(x) = \varphi_{t_0}(x)$ or $\varphi_{t_0}^H(x) = \Delta(\varphi_{t_0}(x))$? That is, at the time of impact with the submanifold $S$, is the state $x^-$ or $x^+$?
We will take the second value. i.e. $\varphi_{t_0}^H(x)=x^+$.
However, this means that the orbits will (in general) not be closed. 

%\todo{Tony: does this mean: not have closed orbits?}

\begin{definition}
The (forward) \textit{orbit} and the \textit{$\omega$-limit set} for the hybrid flow $\varphi_t^H(x)$ are given by
\begin{equation}\begin{split}
&o^+_H(x) := \left\{ \varphi_t^H(x) : t\in\mathbb{R}^+\right\}\\
&\omega_H(x) := \left\{ y\in\X : \exists t_n\rightarrow\infty ~ s.t. \lim_{n\rightarrow\infty} \varphi_{t_n}^H(x) = y\right\}
\end{split}
\end{equation}

%\todo{Will: Does an $\alpha$-limit set make sense?
%
%Leo: $\alpha$ limit set is used when the vector fields are analytic instead of $C^1$ as in Theorem 7.2. I think you don't need to talk about $\alpha$ limit set. }

Additionally, we define the set $\fix(f)$ of fixed points for a function $f$ and the covering set of fixed points $\N{\varepsilon}$ as
\begin{equation}\label{eq:fixed_sets}
\begin{split}
\fix(f) &:= \left\{ y\in \X : f(y)=0\right\} \\
\N{\varepsilon} &:= \bigcup_{x\in\fix(f)} \; \mathcal{B}_\varepsilon(x)
\end{split}
\end{equation}
where $\mathcal{B}_\varepsilon (x)$ is the open ball of radius $\varepsilon$ around the point $x$. 
\end{definition}

The main problem studied in this work is that of proving an analogue of the Poincar\'e-Bendixson Theorem for continuous-time planar dynamical systems for a suitable class of planar HDS. For this purpose, we will consider a slightly more general HDS form than the one studied in \cite{grizzle}.

%\todo{Tony: do you mean ``than''? More general than what?\\
%	Will: They have (H.6) $\Delta(S)\cap S=\emptyset$.}

\begin{definition}\label{def:smooth_hybrid}
	A 4-tuple, $(\X,S,f,\Delta)$, forms a hybrid dynamical system if
	\begin{enumerate}
		\item[(H.1)] $\X\subset\mathbb{R}^n$ is open and connected.
		\item[(H.2)] $f:\X\rightarrow\mathbb{R}^n$ is $C^1$.
		\item[(H.3)] $H:\X\rightarrow\mathbb{R}$ is $C^1$.
		\item[(H.4)] $S:=H^{-1}(0)$ is non-empty and for all $x\in S$, $\displaystyle{\frac{\partial H}{\partial x}\ne0}$ (so $S$ is $C^1$ and has co-dimension 1).
		\item[(H.5)] $\Delta:S\rightarrow \X$ is $C^1$.
		\item[(H.6)] $\overline{\Delta(S)}\cap S\subset \fix(f)$ and they intersect transversely.
	\end{enumerate}
\end{definition}
Note that assumptions (H.1) and (H.2) are required for the continuous flow to exist and be unique. (H.3) and (H.4) make the impact surface well defined, according to \cite{grizzle}. The assumption (H.5) is included because without it, the $\omega_H$-limit set is not (in general) invariant under the flow. The last assumption, (H.6), is to rule out the Zeno phenomenon away from fixed points. (A flow experiences a \textit{Zeno state} (\cite{teelSurvey},\cite{SIMIC2002197}) if the flow $\varphi_t^H$ intersects $S$ infinitely often in a finite amount of time.) Assumption (H.6) is slightly weaker than as presented in \cite{grizzle}, where it is assumed that $\overline{\Delta(S)}\cap S=\emptyset$ (or equivalently, the set of impact times is closed and discrete). %Unless otherwise stated, we will assume (H.1)-(H.6).

\begin{remark}
	Dropping hypothesis (H.5), $\omega_H(x)$ is not always invariant under the flow $\varphi_t^H$. That is, if $p\in \omega_H(x)$, then $o_H^+(p)\not\subset \omega_H(x)$.
	The following example shows this situation.
\end{remark}
\begin{example}
Consider the following hybrid system: Let the state-space be $\X=[0,1]\times\mathbb{R}\subset\mathbb{R}^2$ and the continuous flow  be determined by
$\dot{x}=1$ and $\dot{y} = -y^2$. Let the impact surface be $S = \{ (1,y):y\in\mathbb{R}\}$ and the impact map be given by
$$\Delta(1,y) = \begin{cases}
(0,y) & y > 0\\
(0,y-1) & y \leq 0.
\end{cases}$$
The $\omega_H$-limit set of the starting point $(0,1)$ is the interval $[0,1]\times\{0\}$ which is clearly not invariant under the flow of the system because the impact moves the flow away from $\omega_H(0,1)$.
\end{example}
%\todo{Leo: Picture of plane-impact surface and impact map+phase portrait showing the situation? I can do that after discuss with Will}
%%%%%%%%%%%%%%%%%%%%%%%%%%%%%%%%%%%%%%%%%%%%%%%%%%%%%%%%%%%%%%%%%%%%%%%%%%%%%%%
%%%%%%%%%%%%%%%%%%%%%%%%%%%%%%%%%%%%%%%%%%%%%%%%%%%%%%%%%%%%%%%%%%%%%%%%%%%%%%%

%%%%%%%%%%%%%%%%%%%%%%%%%%%%%%%%%%%%%%%%%%%%%%%%%%%%%%%%%%%%%%%%%%%%%%%%%%%
%%%%%%%%%%%%%%%%%%%%%%%%%%%%%%%%%%%%%%%%%%%%%%%%%%%%%%%%%%%%%%%%%%%%%%%%%%%
%%%%%%%%%%%%%%%%%%%%%%%%%%%%%%%%%%%%%%%%%%%%%%%%%%%%%%%%%%%%%%%%%%%%%%%%%%%
%%%%%%%%%%%%%%%%%%%%%%%%%%%%%%%%%%%%%%%%%%%%%%%%%%%%%%%%%%%%%%%%%%%%%%%%%%%%%%%%%%%%%%%%%%%%%%
%%%%%%%%%%%%%%%%%%%%%%%%%%%%%%%%%%%%%%%%%%%%%%%%%%%%%%%%%%%%%%%%%%%%%%%%%%%%%%%%%%%%%%%%%%%%%%

\section{Properties of Hybrid Limit Sets}\label{sec:limit}
%\todo{change title of section... \checkmark}
In this section we study two relevant properties of hybrid limit cycles. First, we study sufficient conditions for which the $\omega_H$-limit set is nonempty and compact in analogy with Theorem \ref{th:perko} in the
Appendix, but for hybrid flows. The result is used to show that with assumption (H.5) the $\omega_H$-limit set is indeed invariant.
%\todo{Leo: I reworded the following result  and the proof. Also I changed "theorem" by "proposition"}
\begin{proposition}\label{th:closed}
	The $\omega_H$-limit set of a trajectory $o_H^+(x)$ is a closed set. Additionally, if $R$ is compact and forward invariant set, then $\omega_H(x)$ is nonempty and compact for  $x\in R$.
\end{proposition}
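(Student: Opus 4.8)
The plan is to mirror the classical groundwork for Poincar\'e--Bendixson (the analogue \ref{th:perko} recorded in the appendix), exploiting the fact that closedness and compactness are \emph{topological} properties of the orbit regarded as a point set; consequently the jump discontinuities of $\varphi_t^H$ play no role, and only the global existence of the trajectory in forward time carries genuinely hybrid content.

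For the first assertion I would establish the identity
\[
\omega_H(x) = \bigcap_{T \geq 0} \overline{\{\varphi_t^H(x) : t \geq T\}}.
\]
The inclusion ``$\subseteq$'' is immediate, since if $y = \lim_{n} \varphi_{t_n}^H(x)$ with $t_n \to \infty$ then for each fixed $T$ all but finitely many $t_n$ exceed $T$, placing $y$ in the closure of the corresponding tail. For ``$\supseteq$'', if $y$ belongs to every tail closure, then taking $T = n$ yields a point $\varphi_{s_n}^H(x)$ with $s_n \geq n$ and $\|\varphi_{s_n}^H(x) - y\| < 1/n$, so $s_n \to \infty$ and $\varphi_{s_n}^H(x) \to y$, whence $y \in \omega_H(x)$. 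As the right-hand side is an intersection of closed sets, $\omega_H(x)$ is closed.

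For the second assertion, I would first record that forward invariance of the compact set $R$ with $x \in R$ presupposes that $\varphi_t^H(x)$ is defined for every $t \geq 0$ and remains in $R$: boundedness inside the compact $R$ precludes finite-time blow-up of the continuous part, while assumption (H.6) is precisely what prevents the hybrid flow from terminating through a Zeno accumulation away from $\fix(f)$. Nonemptiness then follows by sequential compactness: the points $\varphi_n^H(x)$ all lie in the compact set $R$, so some subsequence converges to a limit $y \in R$, and this $y$ belongs to $\omega_H(x)$ by definition. For compactness, each point of $\omega_H(x)$ is a limit of points of the closed set $R$ and hence lies in $R$, so $\omega_H(x) \subseteq R$; being a closed subset (first assertion) of a compact set, it is compact.

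I expect the main obstacle to be not the limit-set topology, which transfers verbatim from the smooth theory, but the certification that the hybrid trajectory is genuinely defined on all of $[0,\infty)$. This is the only place the hybrid structure intervenes: one must invoke (H.6) to exclude Zeno behaviour outside $\fix(f)$, and address the residual case of a Zeno accumulation at a fixed point --- there the trajectory reaches that point in finite time and is continued as a Zeno equilibrium, so $\omega_H(x)$ collapses to the single fixed point, which is still nonempty and compact and thus consistent with the claim.
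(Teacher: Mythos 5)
Your proof is correct, and for the first assertion it takes a genuinely different route from the paper. The paper proves closedness by a direct sequential argument: given $p_n\in\omega_H(x)$ with $p_n\to p$, it runs a diagonal construction over the time sequences witnessing each $p_n$ and closes with the triangle inequality. You instead establish the identity $\omega_H(x)=\bigcap_{T\ge 0}\overline{\{\varphi_t^H(x):t\ge T\}}$ and read off closedness as an intersection of closed sets; both directions of your identity are verified correctly, and the characterization is arguably cleaner since it dispenses with the index bookkeeping of the diagonal argument. (It also makes transparent why the hybrid structure is irrelevant here: the identity holds for an arbitrary function $t\mapsto\varphi_t^H(x)$, continuity playing no role --- though note it does \emph{not} buy you connectedness of $\omega_H(x)$ as in the continuous-time Theorem~\ref{th:perko}, since the tails of a hybrid orbit are disconnected; the proposition correctly omits that conclusion.) For the second assertion your argument coincides with the paper's: nonemptiness via Bolzano--Weierstrass applied to $\{\varphi_n^H(x)\}\subset R$, and compactness because $\omega_H(x)$ is a closed subset of the compact set $R$. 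Your closing remarks on forward completeness and the exclusion of Zeno accumulation via (H.6) address a point the paper leaves implicit; they are a reasonable supplement rather than a necessary repair, since the paper's standing convention is that the hybrid flow is defined for all forward time on a forward invariant set.
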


\begin{proof}
	The proof follows the same arguments as in Theorem \ref{th:perko} (see \cite{perko1991differential} pp. 193).
	
	 First, let us  prove that $\omega_H(x)$ is closed. Let $\{p_n\}_{n\in\mathbb{N}}$ be a sequence in $\omega_H(x)$, such that $p_n\rightarrow p$ when $n\to\infty$. We want to show that $p\in\omega_H(x)$. Since $p_n\in\omega_H(x)$, there exists a sequence of times, $\{t_k^{(n)}\}$, such that $\varphi_{t_k^{(n)}}^H(x)\rightarrow p_n$ when $t_k^{(n)}\rightarrow\infty$. Without loss of generality, consider $t_k^{(n+1)}>t_k^{(n)}$. Then, for all $n\geq 2$ there exists $K_n>K_{n-1}$ such that for all $k\geq K_n$
	$$\left| \varphi_{t_k^{(n)}}^H(x)-p_n\right| < \frac{1}{n}.$$
	Choose a sequence of times $t_n = t_{K_n}^{(n)}$. Then, by the triangle inequality, as $t_n\rightarrow\infty$ we obtain that $\varphi_{t_n}^H(x)$ converges to $p$, that is,
	$$\left| \varphi_{t_n}^H(x) - p\right| \leq \left| \varphi_{t_n}^H(x)-p_n\right| + \left| p_n-p\right| \leq
	\frac{1}{n} + \left| p_n - p \right| \rightarrow 0 {\hbox{ when } n\to\infty}.$$
	For the second part, we have that $\omega_H(x)\subset R$, so it is compact {since it is a closed subset of a compact set}. To show that it is nonempty, we point out that the sequence $\{ \varphi_n^H(x)\}_{n\in\mathbb{N}}$ is in a compact set so by Bolzano-Weierstrass {Theorem}, there exists a convergent subsequence.
\end{proof}
\begin{remark}Note that $\omega_H(x)$ is closed but $o_H^+(x)$ is not.\end{remark}
%\todo{Leo: I reworded the statement and changed theorem by proposition}
\begin{proposition}
	$\omega_H(x)$ is invariant under the flow, {$\varphi_t^H$}, i.e., if $x\in\X$, for all $p\in \omega_H(x)$, $o_H^+(p)\subset\omega_H(x)$.
\end{proposition}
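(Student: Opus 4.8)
The plan is to imitate the classical argument for continuous flows, in which invariance of the $\omega$-limit set rests on two ingredients: the semigroup (cocycle) property of the flow and its continuous dependence on initial conditions. The entire purpose of hypothesis (H.5) is to recover the second ingredient for the hybrid flow: although $\varphi_t^H$ is discontinuous in $t$ at impact times, continuity of $\Delta$ is exactly what restores continuity of the flow with respect to the \emph{initial condition}, and without it the previous Example shows invariance can fail.

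First I would fix $p\in\omega_H(x)$ and a time $s\geq 0$; since $s$ is arbitrary, it suffices to prove $\varphi_s^H(p)\in\omega_H(x)$. By definition there is a sequence $t_n\to\infty$ with $\varphi_{t_n}^H(x)\to p$. Invoking the semigroup property $\varphi_{s+t_n}^H(x)=\varphi_s^H\big(\varphi_{t_n}^H(x)\big)$, which holds because the hybrid system is autonomous, the argument reduces to establishing
\[
\varphi_s^H\big(\varphi_{t_n}^H(x)\big)\longrightarrow \varphi_s^H(p)\qquad\text{as } n\to\infty.
\]
Granting this, we obtain $\varphi_{s+t_n}^H(x)\to \varphi_s^H(p)$ with $s+t_n\to\infty$, which is precisely the statement that $\varphi_s^H(p)\in\omega_H(x)$, and hence $o_H^+(p)\subset\omega_H(x)$.

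The crux is the displayed convergence, i.e.\ continuous dependence of the hybrid flow on its initial condition over the finite interval $[0,s]$. Here I would first use (H.6) to guarantee that the trajectory of $p$ meets $S$ only finitely many times on $[0,s]$ (no Zeno away from fixed points), so that the flow decomposes into finitely many smooth arcs interrupted by applications of $\Delta$. On each smooth arc, continuous dependence is the standard theorem for the $C^1$ field $f$ afforded by (H.1)--(H.2). At each impact, transversality of the crossing --- guaranteed by $\partial H/\partial x\neq 0$ on $S$ together with the flow not being tangent to $S$ --- lets the implicit function theorem produce an impact time depending continuously on the nearby initial point, and (H.5) then propagates continuity through the reset $\Delta$. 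Composing the finitely many continuous arcs with the finitely many continuous resets yields continuity of $\varphi_s^H$ at $p$.

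The main obstacle is exactly this hybrid continuity step, and the genuinely delicate cases are those in which the trajectory of $p$ grazes $S$ tangentially, or in which $\varphi_s^H(p)$ lands on $S$ precisely at the terminal time; these must be excluded by transversality or sidestepped by choosing $s$ to avoid the impact times, the latter being harmless since we only need the conclusion for a dense set of $s$ and $\omega_H(x)$ is closed by Proposition \ref{th:closed}. Finally, the degenerate possibility $p\in\fix(f)$ is handled directly, since the forward orbit of a fixed point is trivial.
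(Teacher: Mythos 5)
Your proposal is correct and follows the same overall skeleton as the paper's proof --- reduce invariance to the semigroup identity $\varphi_{s+t_n}^H(x)=\varphi_s^H(\varphi_{t_n}^H(x))$ plus continuity of $\varphi_s^H$ with respect to the initial condition --- but the two arguments establish that continuity in genuinely different ways. The paper re-topologizes $\X$ by declaring $y\sim\Delta(y)$ for $y\in S$ and taking open balls to be unions of balls over equivalence classes; in that coarser topology pre- and post-impact states are ``close,'' so the hybrid flow becomes continuous in its initial data and the classical argument goes through verbatim. You instead stay in the standard topology and build continuity by hand: (H.6) rules out Zeno behavior away from $\fix(f)$, so on $[0,s]$ the trajectory of $p$ is a finite concatenation of smooth arcs and resets; continuous dependence for the $C^1$ field handles each arc, the implicit function theorem at a transversal crossing makes the impact time continuous in the initial point, and (H.5) propagates continuity through $\Delta$. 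Your route is more explicit about where the argument can break --- grazing (tangential) intersections with $S$, and terminal times $s$ that coincide with an impact time of $p$, the latter of which you correctly dispose of via density of good times together with closedness of $\omega_H(x)$ from Proposition \ref{th:closed}. The paper's re-topologization buys a one-line conclusion but quietly sweeps the same degeneracies (grazing crossings, and limit points $p$ lying on $S$ itself, where nearby initial conditions on the post-impact side never jump) under the rug; neither proof fully resolves those cases, so your version is, if anything, the more transparent of the two about what additional transversality is being used.
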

\begin{proof}

%\todo {Change ``work around'' \checkmark. Also, is this just a definition or does 
%it have content. Need to discuss.\\
%Will: This says that $\omega_H(x)$ is invariant under the flow.}

	The proof of the analogous theorem {for continuous time systems given in} \cite{perko1991differential} {(see Theorem $2$, pp.194)} depends on the trajectories changing continuously based on initial conditions. This {argument} clearly does not work {for hybrid flows}, so we need to modify what continuous means. We do this by identifying points as being close if they are on opposite sides of the jump. 
	
	Define an equivalence relation on $\X$ by $x\sim y$ if $x=y$ or if $x=\Delta(y)$ for $y\in S$. Re-topologize $\X$ by defining open balls via
	$$\tilde{\mathcal{B}}_\varepsilon(x) = \bigcup_{y\in [x]} \mathcal{B}_\varepsilon(y).$$
	Then, under this topology the flow, $\varphi_t^H$ is continuous. Additionally, we now have continuous dependence on initial conditions. If the flow takes us away from the impact surface, we get continuous dependence under normal continuous flows. If it takes us to the impact surface, we are still continuous because the impact is continuous.\\
	\indent {Consider} $q\in o_H^+(p)$. Let $t_0$ {be the time} such that $q = \varphi_{t_0}^H(p)$. Additionally, {let $\{t_n\}_{n\in\mathbb{N}}$ be a sequence of times  such that} $\varphi_{t_n}^H(x)\rightarrow p$ {as $t_n\to\infty$}.
	By the semi-group property of flows and the continuity of the flow with respect to initial conditions, we get
	$$\varphi_{t_0+t_n}^H(x) = \varphi_{t_0}^H\circ \varphi_{t_n}^H(x)
	\rightarrow \varphi_{t_0}^H(p) = q.	$$
\end{proof}

%%%%%%%%%%%%%%%%%%%%%%%%%%%%%%%%%%%%%%%%%%%%%%%%%%%%%%%%%%%%%%%%%%%%%%%%%%%%%%%
%%%%%%%%%%%%%%%%%%%%%%%%%%%%%%%%%%%%%%%%%%%%%%%%%%%%%%%%%%%%%%%%%%%%%%%%%%%%%%%
\section{Poincar\'e-Bendixson theorem for $2$ dimensional hybrid dynamical systems}\label{sec:poincare}

\subsection{Preliminary result for discrete dynamical systems}

\begin{definition}
Let $S$ be a smooth manifold and $P:S\rightarrow S$ be $C^1$. 
%$P\in C^1(S)$ be a \tb{differentiable} function $P:S\rightarrow S$, 
The \textit{discrete flow} is defined as
\begin{equation}\label{eq:disc}
x_{n+1}=P(x_n).
\end{equation}
The \textit{discrete} $\omega_d$-\textit{limit set} is defined as
\begin{equation}\label{eq:discrete_omega}
\omega_d(x):=\left\{ y\in S : \exists N_n\rightarrow\infty~s.t.~ \lim_{n\rightarrow\infty}P^{N_n}(x)=y \right\}.
\end{equation}
\end{definition}

%\todo{Leo: I think P only need to be $C^1$. In all results after the definition only is needed $C^1$}
The set $\omega_d(x)$ satisfies the following property
\begin{lemma}\label{le:single}
	Let $P:[a,b]\rightarrow[a,b]$ be $C^1$ and injective. Then for all $x\in[a,b]$, $\omega_d(x)$ is either a single point or two points. i.e. all trajectories approach a periodic orbit.
\end{lemma}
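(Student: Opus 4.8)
The plan is to exploit the fact that a continuous injective map on an interval must be strictly monotone. First I would observe that since $P:[a,b]\to[a,b]$ is $C^1$ and injective, it is in particular continuous and injective on an interval, hence strictly monotone: either strictly increasing or strictly decreasing. This dichotomy drives the entire argument, and the two cases account precisely for the two alternatives in the statement.

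In the increasing case I would argue that every orbit is a monotone sequence. If $P(x) > x$, then applying the increasing map $P$ repeatedly gives $P^{n+1}(x) > P^n(x)$ for all $n$, so $\{P^n(x)\}$ is increasing; symmetrically it is decreasing if $P(x) < x$, and constant if $x$ is a fixed point. Since the sequence lies in $[a,b]$, it is bounded and monotone, hence convergent to some $p\in[a,b]$, and by continuity $P(p) = p$. Because every subsequence of a convergent sequence shares its limit, the defining subsequences in $\omega_d(x)$ all converge to $p$, so $\omega_d(x) = \{p\}$ is a single point.

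In the decreasing case the subtlety, and the \emph{two points} alternative, appears. Here $P$ is decreasing, so $Q := P\circ P$ is strictly increasing. Applying the increasing-case analysis to $Q$, the even subsequence $x_{2n} = Q^n(x)$ converges monotonically to a fixed point $p$ of $Q$, and the odd subsequence $x_{2n+1} = Q^n(P(x))$ converges monotonically to a fixed point $q$ of $Q$. Continuity of $P$ together with $P(x_{2n}) = x_{2n+1}$ forces $P(p) = q$ and $P(q) = p$, so $\{p,q\}$ is either a fixed point (when $p=q$) or a genuine period-two orbit (when $p\neq q$). Since every subsequential limit of $\{x_n\}$ must equal one of $p$ or $q$, I conclude $\omega_d(x) = \{p,q\}$, a set of one or two points.

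The main obstacle I anticipate is the bookkeeping in the decreasing case: one must verify that the even and odd subsequences are each individually monotone — this is exactly where injectivity, via the induced monotonicity of $Q$, enters — and that their limits are interchanged by $P$, so that no third accumulation point can arise. The reduction to strict monotonicity is the conceptual key; once it is in hand, the remaining work is the standard monotone-convergence argument applied to $P$ and to $P\circ P$.
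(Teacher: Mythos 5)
Your proof is correct and follows essentially the same route as the paper's: reduce to a strictly increasing map (passing to $P\circ P$ in the decreasing case) and then apply monotone convergence to conclude that the orbit settles on a fixed point or a period-two orbit. One minor point in your favor: you derive strict monotonicity directly from continuity plus injectivity, whereas the paper asserts $P'>0$ or $P'<0$ on the entire interval, which does not actually follow from $C^1$ plus injective (the derivative may vanish at isolated points, as for $x\mapsto x^3$); your route sidesteps that imprecision while reaching the same conclusion.
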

\begin{proof}
	First, because $P$ is invertible on its image and differentiable, $P'>0$ or $P'<0$ on the entire interval. Without loss of generality, assume that it is increasing (by examining $P^2$ if $P'<0$).  Next, since we are condsidering the $\omega_d-$limit set, we can take an iterate of $P$. This makes the system into $P:[c,d]\tilde{\rightarrow}[c,d]$ where $c=P(a)$ and $d=P(b)$. Additionally, since $P$ is  a bijection and is continuous and increasing, we must have $P(c)=c$ and $P(d)=d$.
	
	Define the closed set $F:=\{x:P(x)=x\}$. Then, if $x\in F$ we are done. So assume that $x\not\in F$. Let $a_1\in F$ be the maximal element of $F$ less than $x$ and $a_2\in F$ be the minimal element greater than $x$. Also, call the invariant interval $I=(a_1,a_2)$. Since $P-Id$ does not have a root on $I$ and is continuous, we have two possibilities for all $y\in I$: either $P(y)>y$ or $P(y)<y$.
	
	If $P(y)<y$ for all $y\in I$, then the sequence $P(x),P^2(x),\ldots$ is  monotone decreasing and thus is convergent. Likewise, if $P(y)>y$, $P^n(x)$ is a monotone increasing sequence. Thus, $P^n(x)$ always converges and its limit set must be a single point, or two points if we're dealing with $P^2$.
\end{proof}

%\todo{Leo: I need a talk with Will about the proof to understand it}

Thus, if $P$ is injective, the $\omega_d$-limit set is either a periodic orbit or a fixed point. %We will use both of these facts later on in proving results about HDS.

\subsection{Existence of hybrid Poincar\'e map}
To study periodic orbits it is useful to take a Poincar\'e section, of which it would seem natural to take $S$ as the section \cite{grizzle}. The problem is that for a given $x\in S$, it is not guaranteed that $P'(x)$ exists. The next theorem addresses this problem.
\begin{theorem}\label{thm:smooth}
	Let $x_0\in S\setminus\fix(f)$ be such that there exists a time, $T_0>0$, where $\varphi_{T_0}(x_0)\in S$. Additionally, assume that the flow intersects the impact surface transversely at $\varphi_{T_0}(x_0)$. Then there exists an $\varepsilon>0$ and a $C^1$ function $\tau:\mathcal{B}_{\varepsilon}(x_0)\cap S\rightarrow \mathbb{R}^+$ such that for all $y\in \mathcal{B}_{\varepsilon}(x_0)\cap S$, $\varphi_{\tau(y)}(y)\in S$. 
\end{theorem}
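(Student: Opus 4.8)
The plan is to realize $\tau$ as the function supplied by the Implicit Function Theorem applied to the $C^1$ map that records how far a trajectory is from the impact surface. Concretely, I would define
\[
G(x,t) := H(\varphi_t(x)), \qquad G:\X\times\mathbb{R}\to\mathbb{R},
\]
where $\varphi_t$ is the continuous flow of $f$. The condition $\varphi_s(y)\in S$ is then exactly $G(y,s)=0$, so the whole problem reduces to solving $G(y,s)=0$ for $s$ as a $C^1$ function of $y$ near the known solution $(x_0,T_0)$.

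First I would record that $G$ is $C^1$: since $f$ is $C^1$ by (H.2), standard ODE theory gives that the flow $(t,x)\mapsto\varphi_t(x)$ is $C^1$ jointly in time and initial condition, and $H$ is $C^1$ by (H.3), so $G$ is $C^1$. By hypothesis $\varphi_{T_0}(x_0)\in S$, hence $G(x_0,T_0)=0$. Next I would compute the time derivative at the base point: using $\tfrac{d}{dt}\varphi_t(x_0)=f(\varphi_t(x_0))$ and the chain rule,
\[
\frac{\partial G}{\partial t}(x_0,T_0) = \frac{\partial H}{\partial x}\big(\varphi_{T_0}(x_0)\big)\cdot f\big(\varphi_{T_0}(x_0)\big).
\]
The transversality assumption at $\varphi_{T_0}(x_0)$ is precisely the statement that this inner product is nonzero (the vector field is not tangent to $S=H^{-1}(0)$), which is the nondegeneracy needed to invert in the $t$-variable. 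With $\partial_t G(x_0,T_0)\neq 0$, the Implicit Function Theorem yields an open neighborhood $U\subset\X$ of $x_0$ and a $C^1$ function $\tilde\tau:U\to\mathbb{R}$ with $\tilde\tau(x_0)=T_0$ and $G(x,\tilde\tau(x))=0$ for all $x\in U$; equivalently $\varphi_{\tilde\tau(x)}(x)\in S$.

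Finally I would restrict and shrink. Choose $\varepsilon>0$ with $\mathcal{B}_\varepsilon(x_0)\subset U$ and set $\tau:=\tilde\tau\big|_{\mathcal{B}_\varepsilon(x_0)\cap S}$, which is $C^1$ as the restriction of a $C^1$ map. Since $\tilde\tau(x_0)=T_0>0$ and $\tilde\tau$ is continuous, after possibly shrinking $\varepsilon$ we obtain $\tau(y)>0$ for all $y\in\mathcal{B}_\varepsilon(x_0)\cap S$, so $\tau$ indeed takes values in $\mathbb{R}^+$ and satisfies $\varphi_{\tau(y)}(y)\in S$.

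I do not expect a serious obstacle: this is the standard first-return-time construction. The only points requiring care are, first, invoking the \emph{joint} $C^1$-dependence of the flow on $(t,x)$ rather than mere continuity, so that $G$ is genuinely $C^1$ and the IFT applies; and second, noting that the transversality that matters is the one assumed at the return point $\varphi_{T_0}(x_0)$, not at $x_0$ itself, since it is $\partial_t G$ evaluated at $(x_0,T_0)$ that must be nonzero. The trivial root $G(x_0,0)=0$ arising from $x_0\in S$ causes no difficulty, because the IFT conclusion is local around $t=T_0>0$.
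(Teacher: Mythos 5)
Your proof is correct and follows essentially the same route as the paper: apply the implicit function theorem to the $C^1$ composition $H\circ\varphi$ at the point $(x_0,T_0)$, using joint $C^1$ dependence of the flow on $(t,x)$ and the transversality at the return point to guarantee $\partial_t\ne 0$. The only difference is that the paper's proof actually works with $F(t,x)=H(\varphi_t(\Delta(x)))$, inserting the impact map $\Delta$ (as needed for the hybrid return map used later), whereas you prove the statement exactly as written, without $\Delta$; the argument is identical either way.
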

\begin{proof}
	Define the function $F:(0,+\infty)\times S\rightarrow \X$ by $F(t,x) = H(\varphi_t(\Delta(x)))$. It follows from Theorem 1 in Section 2.5 in \cite{perko1991differential} that $(t,x)\mapsto \varphi_t(x)$ is $C^1(\mathbb{R}\times \X)$. Combining this with the fact that both $H$ and $\Delta$ are $C^1$ functions, we get that their compositions are. Since $F\in C^1( \mathbb{R}^+ \times S)$, we can use the implicit function theorem. At our point $x_0\in S$, we know that the orbit enters the set $S$ at some minimal future time, $T_0$. This gives $F(T_0,x_0)=0$.
	Differentiating $F$ with respect to time yields:
	$$\frac{\partial F}{\partial t} (T_0,x_0) = \left. \frac{\partial H}{\partial y} \right|_{y=\varphi_{T_0}(\Delta(x_0))\in S} \cdot f(\varphi_t(\Delta(x_0))) \ne 0.$$
	The first factor is nonzero because of assumption (H.4) and the second is nonzero because we are away from a fixed point. Their inner product is nonzero because of the transversality condition. This lets us use the implicit function theorem (cf., e.g., Theorem 9.28 in \cite{rudin1976principles}), to show that there exists a neighborhood of $\mathcal{B}_{\varepsilon}(x_0)$ of $x_0$ and a $C^1$ function $\tau$ with all the desired properties.
\end{proof}

\subsection{Poincar\'e-Bendixson theorem for planar HDSs}

With the existence of a smooth Poincar\'e map, we can now prove the Poincar\'e-Bendixson theorem for planar HDSs.
\begin{theorem}\label{th:HPB}
%	Assume the conditions (H.1)-(H.6).
%	Additionally add that $\X\subset\mathbb{R}^2$ is open, $\Delta:S\rightarrow\X$ is injective and there exists an initial point $x_0\in S$ such that the forward hybrid orbit is contained in a forward invariant compact subset of $\X$. If we call this compact set $F$, we also need that $F\cap S$ is diffeomorphic to an interval. Then, if $\omega_H(x_0)$ contains no critical points of $f$, $\omega_H(x_0)$ is a periodic orbit. More so, $\omega_H(x_0)$ intersects the impact surface, $S$, at most \textcolor{red}{twice}.\\
	Assume the conditions (H.1)-(H.6). Additionally, assume
	\begin{enumerate}
		\item[(C.1)] $\X\subset \mathbb{R}^2$.
		\item[(C.2)] $\Delta:S\rightarrow \X$ is injective.
		\item[(C.3)] There exists a forward, invariant, compact set $F\subset \X$ with $F\cap \fix(f)=\emptyset$.
		\item[(C.4)] $F\cap S$ is diffeomorphic to an interval.
		\item[(C.5)] The vector field, $f$, is transverse to both $F\cap S$ and $\Delta(F\cap S)$.
	\end{enumerate}
	Then, if $x_0\in F$, $\omega_H(x_0)$ is a periodic orbit. Moreover, $\omega_H(x_0)$ intersects the impact surface, $S$, at most twice.
\end{theorem}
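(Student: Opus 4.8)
The plan is to reduce the continuous Poincar\'e--Bendixson mechanism to a one-dimensional discrete system on the impact surface and then invoke Lemma \ref{le:single}. Fix $x_0\in F$. By (C.3) the orbit $o_H^+(x_0)$ stays in the compact forward-invariant set $F$, and by Proposition \ref{th:closed} the set $\omega_H(x_0)$ is nonempty, compact, and contained in $F$. I would first split on how often the orbit meets $S$. If $o_H^+(x_0)$ meets $S$ only finitely often, then after the last impact it coincides with an ordinary continuous trajectory trapped in the compact set $F$, which by (C.3) contains no fixed points; the classical Poincar\'e--Bendixson theorem (cf. Theorem \ref{th:perko}) then forces $\omega_H(x_0)$ to be a periodic orbit $\gamma\subset F$. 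Transversality (C.5) shows $\gamma\cap S=\emptyset$: any crossing point would lie in $F\cap S$, hence be transverse, so nearby trajectories — and therefore $o_H^+(x_0)$ itself as it converges to $\gamma$ — would cross $S$ infinitely often, a contradiction. Thus in this case the conclusion holds with zero intersections.

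The substantive case is when $o_H^+(x_0)$ meets $S$ infinitely often. Let $p_0,p_1,p_2,\dots\in F\cap S$ be the successive impact points. The idea is to realize $p_{n+1}$ as the image of $p_n$ under the hybrid return map
\[
P(y)=\varphi_{\tau(y)}(\Delta(y)),\qquad y\in F\cap S,
\]
where $\tau(y)$ is the first time the continuous flow issued from $\Delta(y)$ returns to $S$. Combining (H.6) with (C.3) gives $\Delta(F\cap S)\cap S=\emptyset$ (otherwise $\Delta(y)\in\overline{\Delta(S)}\cap S\subset\fix(f)$ would meet the forward-invariant $F$, contradicting $F\cap\fix(f)=\emptyset$), so the return times are strictly positive. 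Theorem \ref{thm:smooth} (using (H.4), (C.5) and the absence of fixed points) then shows $\tau$ is $C^1$ and $P$ is $C^1$; using compactness of $F$ one checks every orbit from $F\cap S$ does return, so $P$ maps $F\cap S$ into itself. By (C.4) I identify $F\cap S$ with an interval $[a,b]$, so that $P$ is a $C^1$ self-map of $[a,b]$ with $p_{n+1}=P(p_n)$.

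The key step is injectivity of $P$, which unlocks Lemma \ref{le:single}. I would prove $P'$ never vanishes, so $P$ is strictly monotone and hence injective. Injectivity of $\Delta$ (C.2) together with transversality of $f$ to $\Delta(F\cap S)$ (C.5) makes $\Delta$ an embedding of the interval, so $\Delta'\neq0$; and the continuous first-return map $w\mapsto\varphi_{\tau}(w)$ from the transversal arc $\Delta(F\cap S)$ to the transversal arc $F\cap S$ is a local diffeomorphism — this is exactly the nondegeneracy extracted from the implicit function theorem in the proof of Theorem \ref{thm:smooth} — so its derivative along the sections is nonzero. Composing, $P'\neq0$ on $[a,b]$, whence $P$ is strictly monotone and injective. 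Lemma \ref{le:single} then yields that $\omega_d(p_0)$ is either a single point $q$ with $P(q)=q$, or a two-point set $\{q_1,q_2\}$ with $P(q_1)=q_2$ and $P(q_2)=q_1$.

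Finally I would translate this back to the hybrid flow. Strict monotonicity makes $\{p_n\}$ monotone in $[a,b]$, hence convergent to $q$ (resp. alternating toward $q_1,q_2$). Continuous dependence of the continuous flow on initial data, applied over the bounded return interval, shows the arcs $\varphi_{[0,\tau(p_n)]}(\Delta(p_n))$ converge to the closed arc generated by $q$ (resp. by $q_1$ and $q_2$), so $\omega_H(x_0)$ is exactly the hybrid periodic orbit through these limit points, meeting $S$ only at the one or two points of $\omega_d(p_0)$ — at most twice. The main obstacle is precisely the injectivity/monotonicity of $P$: the jump $\Delta$ disconnects the hybrid orbit, so the classical Jordan-curve argument for monotonicity of successive intersections collapses, and one must route monotonicity through the smoothness and nonvanishing derivative of the return map, where the injectivity of $\Delta$ is indispensable. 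A secondary care point is verifying that return times stay bounded, so that the limiting arcs genuinely close up into a periodic orbit.
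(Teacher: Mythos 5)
Your overall architecture is the same as the paper's: split on whether the orbit meets $S$ finitely or infinitely often, dispose of the first case with the classical Poincar\'e--Bendixson theorem, and in the second case reduce to a one-dimensional discrete system on the section and invoke Lemma \ref{le:single}. The genuine gap is the sentence ``using compactness of $F$ one checks every orbit from $F\cap S$ does return, so $P$ maps $F\cap S$ into itself.'' This is false in general: for $y\in F\cap S$ the continuous orbit of $\Delta(y)$ is trapped in $F$, but it may spiral onto a continuous periodic orbit lying in $F$ that never meets $S$, and then it never returns. The domain of the return map is therefore only an open subset $S^1\subset F\cap S$, and the points relevant to your Case 2 are those returning infinitely often, i.e.\ the nested intersection $S^\infty=\bigcap_{n} S^n$. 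Lemma \ref{le:single} requires a $C^1$ injective self-map of a closed \emph{interval}, so before it can be invoked one must show that $S^\infty$ is an interval (or a point). That is precisely where the paper spends its effort: an induction showing each $S^n$ is an interval, using the region bounded by $\Delta([A,B])$, $S^n$, $o^+(A)$, $o^+(B)$ together with the rectification theorem and the transversality hypothesis (C.5). Your proposal skips this step entirely.

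The shortcut you offer---$P'\neq 0$ everywhere, hence $P$ strictly monotone, hence injective---does not repair the gap. A nonvanishing derivative gives monotonicity only on each connected component of the domain; if the domain is disconnected you obtain neither global monotonicity of $P$ nor monotonicity of the impact sequence $\{p_n\}$, and distinct components may be shuffled. Moreover, the continuous first-return map from the arc $\Delta(F\cap S)$ to the arc $F\cap S$ need not be globally injective: if the orbit of $w_1\in\Delta(F\cap S)$ re-crosses $\Delta(F\cap S)$ at a point $w_2$ before its first arrival at $S$, then $w_1$ and $w_2$ have the same first return, and nothing in (C.1)--(C.5) forbids such a re-crossing. (The implicit-function-theorem nondegeneracy in Theorem \ref{thm:smooth} concerns the locally defined return time, not the globally defined first-return map, which can also change branch as the first intersection with $S$ jumps.) Finally, your closing step needs the return times $\tau(p_n)$ to remain bounded so that the limiting arcs actually close up into a periodic orbit; you flag this as a ``care point'' but do not prove it, and it does not follow from compactness of $F$ alone. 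These are the places where a complete proof must do geometric work, and they are exactly what the paper's $S^n$-induction is designed to handle.
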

\begin{proof}
	For the entirety of this proof, we will redefine $S$ by $F\cap S$. That is, $S$ is diffeomorphic to an interval.
	Consider the Poincar\'e return map, $P:x\mapsto \varphi_{\tau(\Delta(x))}(\Delta(x))$. The domain of this differentiable function is an open subset of $S$ (by Theorem \ref{thm:smooth}). Call this set $S^1$. i.e.
	$$S^1 := \{ x\in S:~\exists t>0~with~\varphi_t(\Delta(x))\in S\}$$
	Specifically, we want to look at all points of $S$ that return back to $S$ infinitely often. Call this set $S^{\infty}$. We can define $S^{\infty}$ recursively as such.
	\begin{equation*}
	\begin{split}
	S^{n+1} := \{ x\in S^n &:~\exists t>0 ~with ~\varphi_t(\Delta(x))\in S^n\}\\
	S^\infty := &\bigcap_{n=1}^{\infty}S^n
	\end{split}
	\end{equation*}
	We have two cases for $x_0$: either $o_H^+(x_0)$ hits $S^\infty$ (and thus the impact surface infinitely often), or $o_H^+(x_0)$ avoids $S^\infty$.
	\begin{caseof}
		\case{$o_H^+(x_0)$ misses $S^\infty$.}{
			In this case, there exists a time large enough where the flows completely stops being hybrid. In this setting, we can invoke the normal Poincar\'e-Bendixson theorem for continuous systems. See Theorem 1 in chapter 3.7 in \cite{perko1991differential}.
		}
		\case{$o_H^+(x_0)$ hits $S^\infty$.}{
			Because $o_H^+(x_0)\cap S^\infty\ne\emptyset$, we know that $S^\infty\ne\emptyset$.
			We wish to show that $S^\infty$ is either an interval or a point. 
			This will let us use Lemma \ref{le:single} to show that $P:S^\infty\rightarrow S^\infty$ converges to a limit cycle.
			
			We will begin by showing that for each $n\geq 0$, $S^n$ is an interval. By assumption (C.4), $S^0$ is an interval. We will continue by induction. Assume that $S^n$ is an interval and we wish to prove that $S^{n+1}$ is also an interval.
			
			Since $S^n$ is diffeomorphic to an interval, let $g_n:S^n\rightarrow [a_n,b_n]$ be a diffeomorphism.
			Define the points $a_{n+1}$ and $b_{n+1}$ as follows:
			\begin{equation}
			\begin{split}
			a_{n+1} &:= \min \left\{ 
			x\in [a_n,b_n] : o^+(\Delta( g_n^{-1}(x)))\cap S^n\ne\emptyset
			\right\} \\
			b_{n+1} &:= \max \left\{ 
			x\in [a_n,b_n] : o^+(\Delta( g_n^{-1}(x)))\cap S^n\ne\emptyset
			\right\}.
			\end{split}
			\end{equation}
			We claim that $S^{n+1}$ is diffeomorphic to $[a_{n+1},b_{n+1}]$. Denote the points on the curve $S^n$ by  $A:=g_n^{-1}(a_{n+1})$ and $B:= g_n^{-1}(b_{n+1})$. The claim can then be verified by constructing the set $\mathcal{B}$ under the continuous dynamics as the region bounded by the four curves: $\Delta( [A,B])$, $S^n$, $o^+(A)$, and $o^+(B)$. Using assumption (C.5), we know that for every initial condition on $\Delta([A,B])$, the flow will eventually hit the set $S^n$. Thus we can be apply the Rectification Theorem \cite{arnolʹd1978ordinary}, to straighten out the flow.
			
%			Our trajectory is truly hybrid for all time. Now, we apply our focus solely to the return map, $P$. If the discrete limit set under $P$ is periodic (or fixed), then our original $\omega_H(x_0)$ will be periodic and we are done.
%			
%			We consider the discrete dynamics $P:S^\infty\rightarrow S^\infty$. We can restrict the dynamics to being $P:S^\infty\cap F\rightarrow S^\infty\cap F$. But, by an assumption (C.4),
%			$$S^\infty\cap F \subset S\cap F~\tilde{=}~[a,b]$$
%			This lets us come up with an extended system, $\tilde{P}:[a,b]\rightarrow[a,b]$ that restricts to $P$, is $C^1$, and is still injective by (C.2). Using Lemma \ref{le:single}, we obtain that the $\omega_d$-limit set of any point in $[a,b]$ under $\tilde{P}$ is either a single point or two points. Restricting to our original system, we get that for all $x\in S^\infty\cap R$, $P^n(x)\rightarrow\tilde{x}$ {or $P^{2n}(x)\rightarrow \tilde{x}$}. So $\omega_H(x_0)$ is a periodic orbit and either $\omega_H(x_0)\cap S = \tilde{x}_0$ {or $\{\tilde{x}_0,P(\tilde{x}_0)\}$}.
		}
	\end{caseof}
	%\todo{Leo: Will, can you explain more about how to apply  \cite{arnolʹd1978ordinary} here please. Also, how this implies the second part of the result, add the page in the book please. Need to talk more with Will about the end part of the thorem.\\
%	Will: The purpose of \cite{arnolʹd1978ordinary} is to make the flow trivial on a box. The second part of the theorem comes from Lemma \ref{le:single}.
%}
\end{proof}
%If the condition (C.5) is dropped from Theorem \ref{th:HPB}, $S^\infty$ is no longer guaranteed to be an interval. It is an ongoing effort to see how necessary a condition this is. 
%%\textbf{Note: I am *almost* certain that I can drop condition (C.5).}
%Requiring that $S\cap F$ is an interval is an unfortunate restriction. But as seen in the next example, it is required.
Conditions (C.2), (C.4), and (C.5) are unfortunate restrictions, however, they are necessary. If (C.4) is dropped, the flow can end up looking like a Kronocker flow; see Example \ref{ex:c4}. If (C.2) or (C.5) are dropped, mixing can be added to the system and chaos can occur; see Example \ref{ex:c5}
\begin{example}\label{ex:c4}
	Let $S=x^2+y^2-4$. Thus the impact surface is the circle of radius 2 centered about the origin. Let the impact map be given by $\Delta(x,y)=(x/2,y/2)$, so the image of the map is the unit circle. Lastly, define the vector field to be (in polar coordinates) $\dot{r}=\dot{\theta}=1$. Then, $F=\{ 1\leq|r|\leq2\}$ is a compact invariant set. But for all $x\in F$, $\omega_H(x)=F$.
\end{example}
\begin{example}\label{ex:c5}
	Let $S = \{ x=2\}$ and define $\Delta$ as $\Delta(2,y) = (y,4y(1-y))$. Then, if the flow is $\dot{x}=1$, $\dot{y}=0$, the first return map becomes the Logistic map which leads to chaos (see \cite{bookHSD} pp. 344 for more details).
\end{example}
%\todo{Leo: What do you mean by ``the logistic equation may lead to chaos?\\
%Will: The flow of the Logistic map is chaotic.}
%%%%%%%%%%%%%%%%%%%%%%%%%%%%%%%%%%%%%%%%%%%%%%%%%%%%%%%%%%%%%%%%%%%%%%%%%%%%%%%%%%
%%%%%%%%%%%%%%%%%%%%%%%%%%%%%%%%%%%%%%%%%%%%%%%%%%%%%%%%%%%%%%%%%%%%%%%%%%%%%%%%%%
\section{Stability of Periodic Orbits}\label{sec:stability}
%\todo{Leo: I reworded a little the following phrase, please check}

Given that we have now a method to determine the existence of periodic orbits, we would like to be able to determine their stability. As such, we would like to be able to compute the derivative of $P$ and get a result analogous to Theorem \ref{th:AA} for hybrid dynamical systems.
There are a couple of {differences we are faced with in the hybrid approach as opposed to the continuous-time situation}. First, we do not get to choose $\Sigma$ to be normal to the flow as in \cite{perko1991differential}; we are stuck with $\Sigma=S$. Second, we are no longer dealing with a closed orbit and we have to take the geometry of the impact into consideration. We first look at a helpful {result} about the continuous flow {we will use in Theorem \ref{th:stability}}.

\begin{lemma}[\cite{perko1991differential}, p. 86]\label{perkodiv}
	Let $\varphi_t(x_0)$ be the flow of $\varphi_t:\X\rightarrow\X, \frac{d}{dt}\varphi_t(x) = f(\varphi_t(x))$ with initial condition $x_0$. Then,
	\begin{equation}\label{eq:det_of_partials}
	\det \left.\frac{\partial}{\partial x} \varphi_t(x)\right|_{x=x_0}=
	\exp \left(\int_0^t \, \nabla \cdot f(\varphi_s(x_0))\, ds\right).
	\end{equation}
\end{lemma}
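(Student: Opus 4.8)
The plan is to recognize this identity as the classical Liouville (Abel--Jacobi--Ostrogradskii) formula and to prove it by reducing the determinant to a scalar linear ODE. First I would introduce the Jacobian of the flow with respect to the initial condition, $\Phi(t) := \frac{\partial}{\partial x}\varphi_t(x)\big|_{x=x_0}$, which is well defined and $C^1$ in $t$ because $f\in C^1$ makes $(t,x)\mapsto\varphi_t(x)$ of class $C^1$ (Theorem 1, Section 2.5 of \cite{perko1991differential}, already invoked above). Differentiating the flow equation $\frac{d}{dt}\varphi_t(x)=f(\varphi_t(x))$ with respect to $x$ and evaluating at $x_0$ yields the \emph{variational equation}
$$\dot{\Phi}(t) = Df\big(\varphi_t(x_0)\big)\,\Phi(t), \qquad \Phi(0)=I,$$
where $Df$ denotes the Jacobian matrix of $f$.

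Next I would set $W(t):=\det\Phi(t)$ and differentiate using Jacobi's formula, $\frac{d}{dt}\det\Phi = \Tr\big(\mathrm{adj}(\Phi)\,\dot\Phi\big)$. Substituting the variational equation and using the cyclic invariance of the trace together with $\Phi\,\mathrm{adj}(\Phi)=(\det\Phi)\,I$ gives
$$\dot W(t) = \Tr\big(\mathrm{adj}(\Phi)\,Df(\varphi_t(x_0))\,\Phi\big) = (\det\Phi)\,\Tr\big(Df(\varphi_t(x_0))\big) = \Tr\big(Df(\varphi_t(x_0))\big)\,W(t).$$
Since $\Tr(Df) = \sum_i \partial f_i/\partial x_i = \nabla\cdot f$, this is the scalar linear ODE $\dot W = (\nabla\cdot f(\varphi_t(x_0)))\,W$ with initial value $W(0)=\det I = 1$.

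Finally I would integrate this first-order linear equation to obtain $W(t) = \exp\left(\int_0^t \nabla\cdot f(\varphi_s(x_0))\,ds\right)$, which is exactly \eqref{eq:det_of_partials}. The only genuinely technical point---and the main obstacle---is justifying the variational equation, i.e.\ interchanging the $t$- and $x$-derivatives of the flow; this is precisely where the $C^1$ regularity of $f$ (and the resulting joint $C^1$ smoothness of the flow cited above) is used, guaranteeing that $\Phi(t)$ is differentiable in $t$ and solves the stated linear matrix ODE. Everything else is the standard manipulation of determinants via the adjugate and the elementary solution of a scalar linear ODE.
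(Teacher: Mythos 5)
Your argument is correct: the paper states this lemma without proof, citing Perko (p.~86), and your derivation via the variational equation $\dot\Phi = Df(\varphi_t(x_0))\Phi$, Jacobi's formula for $\frac{d}{dt}\det\Phi$, and integration of the resulting scalar linear ODE is exactly the standard (Liouville/Abel) proof given in that reference. Nothing is missing; the justification of the variational equation from the $C^1$ regularity of $f$ is indeed the only technical point, and you handle it appropriately.
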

To understand the stability of our orbit, we want to look at the hybrid Poincar\'e return map, $P:S^1\rightarrow S$. As in Theorem \ref{thm:smooth}, let $\tau:\Delta(S^1)\rightarrow\mathbb{R}$ be the time required to return to the impact surface. Then, if we denote $y:=\Delta(x)$, we can write $P$ as
\begin{equation}\label{eq:poincare_expanded}
P(x) = \varphi_{\tau(y)}(y) = \int_{0}^{\tau(y)} \, f\left(\varphi_s(y)\right)\, ds.
\end{equation}
\begin{theorem}\label{th:stability}
	Assume that we have a hybrid periodic orbit that intersects $S$ once. Suppose that $x\in S$ and $y=\Delta(x)$. Additionally, let $\theta$ be the angle $f(x)$ makes with the tangent of $S$ at $x$ and $\alpha$ be the angle of $f(y)$ with $\Delta(S)$. Assume that $\theta$ and $\alpha$ are not integer multiple of $\pi$. If we denote the continuous flow that connects $y$ to $x$ by $\gamma(t)$ and suppose that it takes time $T$ to complete the loop, the derivative of the Poincar\'e map is
	\begin{equation}\label{eq:finally}
	\left. P'(x) \right. = \Delta'(x)\cdot\frac{ \lVert f(y)\rVert}{\lVert f(x)\rVert}
	\frac{\sin\alpha}{\sin\theta} \cdot\exp \left(\int_0^T  \nabla\cdot f(\gamma(t))\, dt\right).
	\end{equation}
\end{theorem}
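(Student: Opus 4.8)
The plan is to differentiate the expanded form $P(x)=\varphi_{\tau(y)}(y)$ with $y=\Delta(x)$ directly, eliminate the dependence on the unknown return time $\tau$ by a two-dimensional determinant argument, and then invoke Lemma~\ref{perkodiv} to convert the remaining Jacobian determinant into the exponential of the divergence integral. Throughout I work at the fixed point of the periodic orbit, so that $\varphi_T(y)=x$ with $T=\tau(y)$ and $f(\varphi_T(y))=f(x)$.

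First I would parametrize $S$ near $x$ by arclength $\xi(s)$ with $\xi'=e_x$, the unit tangent to $S$ at $x$, and set $y(s)=\Delta(\xi(s))$, a curve lying on $\Delta(S)$ with unit tangent $e_y$ at $y$. Because $S$ and $\Delta(S)$ are one dimensional, $\Delta'(x)$ and $P'(x)$ are genuine scalars (arclength-to-arclength derivatives) and $\tfrac{d}{ds}y=\Delta'(x)\,e_y$ at the base point. Differentiating $P(\xi(s))=\varphi_{\tau(y(s))}(y(s))$, using $\partial_t\varphi_t=f(\varphi_t)$ and $\varphi_T(y)=x$, and noting that the left-hand side is tangent to $S$ at $P(x)=x$ (hence a multiple of $e_x$), yields the vector identity
\begin{equation*}
P'(x)\,e_x=\Delta'(x)\,\Phi\,e_y+\tau'(s)\,f(x),
\end{equation*}
where $\Phi:=\frac{\partial}{\partial y}\varphi_T(y)$ is the state-transition matrix.

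The key step is to eliminate the term carrying $\tau'$. Since $\X\subset\mathbb{R}^2$, I would wedge the identity against $f(x)$ (take the $2\times2$ determinant with $f(x)$ as second column); the contribution $\tau'\,f(x)\wedge f(x)$ vanishes, leaving
\begin{equation*}
P'(x)\,\big(e_x\wedge f(x)\big)=\Delta'(x)\,\big(\Phi e_y\wedge f(x)\big).
\end{equation*}
Here $e_x\wedge f(x)=\lVert f(x)\rVert\sin\theta\neq0$ precisely because $\theta\notin\pi\mathbb{Z}$. For the numerator I would use two standard facts about the flow Jacobian: that the flow transports the vector field, $\Phi f(y)=f(x)$ (from the semigroup property $\varphi_T\circ\varphi_s=\varphi_{T+s}$), and the multiplicativity $\Phi u\wedge\Phi v=\det(\Phi)\,(u\wedge v)$ valid in two dimensions. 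Taking $u=e_y$, $v=f(y)$ gives $\Phi e_y\wedge f(x)=\Phi e_y\wedge\Phi f(y)=\det(\Phi)\,(e_y\wedge f(y))=\det(\Phi)\,\lVert f(y)\rVert\sin\alpha$. Dividing, and then applying Lemma~\ref{perkodiv} to replace $\det\Phi$ by $\exp\!\big(\int_0^T\nabla\cdot f(\gamma(t))\,dt\big)$, reproduces \eqref{eq:finally}.

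The main obstacle I anticipate is bookkeeping rather than conceptual: I must justify that $P'(x)$ and $\Delta'(x)$ are well-defined scalars under the arclength identification, keep the orientations of $e_x$ and $e_y$ consistent so the two sine factors carry their intended signs, and verify $\Phi f(y)=f(x)$ carefully at the fixed point. The transversality hypotheses $\theta,\alpha\notin\pi\mathbb{Z}$ are exactly what guarantee $\sin\theta,\sin\alpha\neq0$, so that $\{e_x,f(x)\}$ and $\{e_y,f(y)\}$ are bases of $\mathbb{R}^2$ and every division above is legitimate.
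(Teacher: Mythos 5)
Your proposal is correct and follows essentially the same route as the paper's proof: both decompose the derivative of $\varphi_{\tau(y)}(y)$ into a flow-Jacobian term plus a term along $f(x)$, kill the latter by a two-dimensional area/wedge argument (the paper phrases this via parallelogram areas $V(u,v)$, you via $\cdot\wedge f(x)$), use $\Phi f(y)=f(x)$ together with $\det$-multiplicativity to produce the $\lVert f(y)\rVert\sin\alpha/\lVert f(x)\rVert\sin\theta$ factor, and finish with Lemma~\ref{perkodiv}. The only cosmetic difference is that you carry the $\Delta'(x)$ factor explicitly through the chain rule from the start, which the paper leaves implicit.
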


\begin{proof}

	\begin{figure}[H]\label{deriv_pic}
		\includegraphics[scale=0.4]{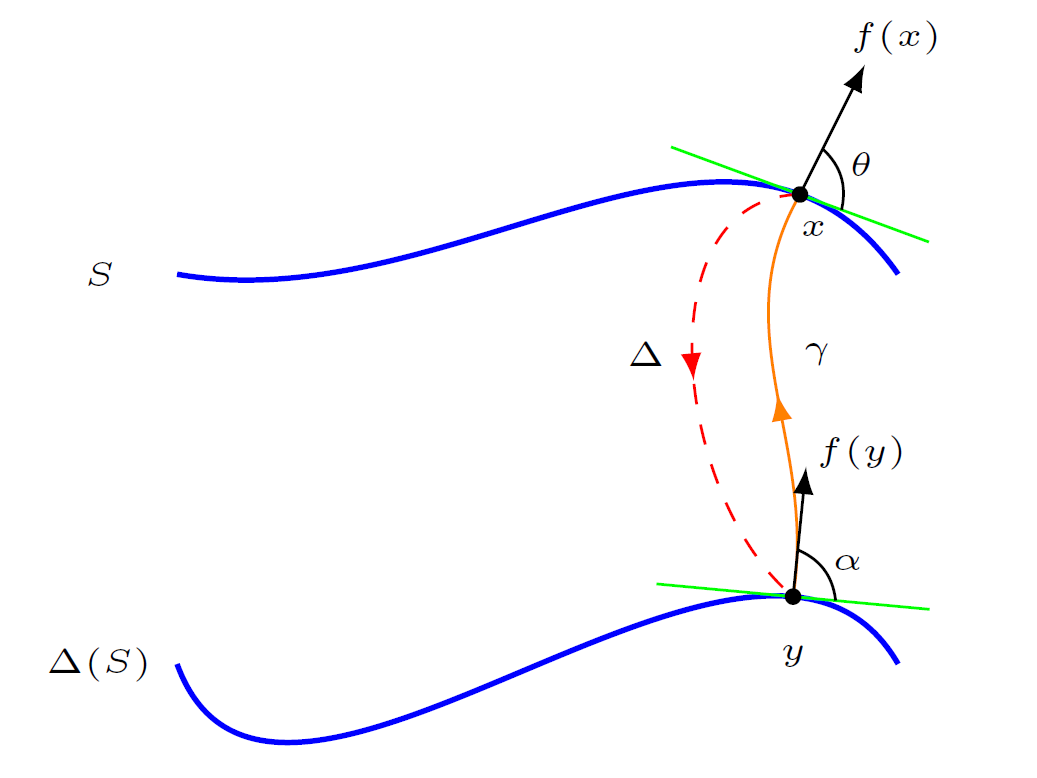}
		\caption{The orbit of the periodic orbit for the system given by Theorem \ref{th:stability}.}
	\end{figure}
	%\todo{Will, can you include the codes for the two figures in the tex please, instead of the .png picture
	%Leo: Finally I kept the figures because makes more clean the tex file}
	To differentiate $P$, let's first look at the continuous part (that is, starting at $y_0=\Delta(x_0)$).
	Let $n$ be the unit normal vector to $\Delta(S)$ at $y$ and let $p$ be the unit tangent vector. To make things reasonable, we want $\langle f(y_0),n_0\rangle\ne0$.
	\begin{equation}\label{eq:partials}
	\begin{split}
	\left.\frac{\partial}{\partial p} \varphi_{\tau(y)}(y)\right|_{y=y_0} 
	&= \int_0^{\tau(y)}\, \left.\frac{\partial}{\partial y} f\left( \varphi_s(y)\right)\right|_{y=y_0} \, ds \cdot \frac{\partial y}{\partial p} + 
	\frac{\partial}{\partial t} (\varphi_{\tau(y_0)}(y_0)) \cdot
	\frac{\partial t}{\partial p}\\
	&= F(y_0)\cdot \delta y + G(y_0)\cdot \delta t
	\end{split}
	\end{equation}
	Now call the flow $\varphi_t(y)=:\gamma(t)$, the time $T=\tau(y)$, and recall that the final point is $\varphi_{\tau(y)}(y)=x$. Then, $G(y) = f(x)$ and $\delta y$ is the unit vector $p$ rooted at $y_0$. We need to figure out what $\delta t$ and $F(y)$ are. By Lemma \ref{perkodiv}, we know the determinant of $F(y)$.
	\begin{equation}\label{eq:Fy}
	\det\left( F(y) \right) = \exp \left(\int_0^T \, \nabla \cdot f(\gamma(t)) \, dt\right)
	\end{equation}
	To find $F$ (in the direction of $\delta y$), we note that we know the derivative in the direction of the flow: $F(y)\cdot f(y) = f(x)$. Knowing the determinant and this direction, we can attempt to find $F(y)$ in the direction of $\delta y$. We first differentiate $H$ from (H.3) along $S$, which is zero because $S$ is a level set of $H$.
	\begin{equation}
	0=\left.\frac{\partial}{\partial p}H(\varphi_{\tau(y)}(y))\right|_{y=y_0} = \left.\left.\frac{\partial}{\partial x}H(x)\right|_{x=x_0} \cdot
	\left( F(y)\cdot \delta y + f(x)\cdot \delta t \right)\right|_{y=y_0}
	\end{equation}
	This tells us that $F(y)\cdot\delta y + f(x)\cdot\delta t$ lies on the tangent to $S$ at $x$.
	\begin{figure}[H]
		\includegraphics[scale=0.27]{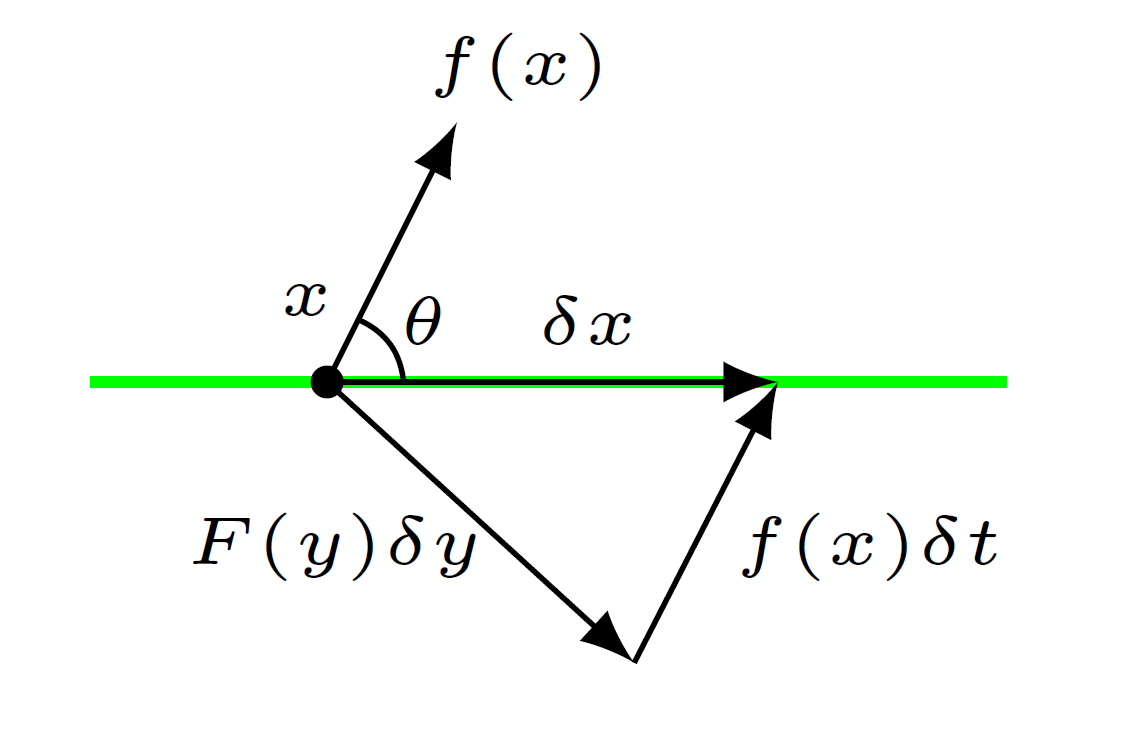}
		\caption{The vector $\delta x= F(y)\delta y + f(x)\delta t$, where the green line is the tangent to $S$ at the point $x$.}
	\end{figure}
	Let $V(u,v)$ be the area of the parallelogram spanned by the two vectors $u$ and $v$. Additionally, let $\Lambda=\det(F(y))$. Then, we have
	\begin{equation}
	\begin{split}
	V(f(y),\delta y) &= \lVert f(y)\rVert\cdot\lVert \delta y \rVert \sin\alpha\\
	V(\underbrace{F(y)\cdot f(y)}_{=f(x)},F(y)\cdot \delta y) &= \Lambda  \lVert f(y)\rVert\cdot\lVert \delta y \rVert \sin\alpha\\
	&= V(f(x),F(y)\cdot\delta y + f(x)\cdot\delta t)\\
	&= \lVert f(x) \rVert \cdot \lVert \delta x \rVert \sin\theta.
	\end{split}
	\end{equation}
	Collecting terms, we see that 
	\begin{equation}
	\frac{\lVert \delta x \rVert}{\lVert \delta y \rVert} = 
	\frac{\lVert f(y)\rVert}{\lVert f(x)\rVert} 
	\frac{\sin\alpha}{\sin\theta} \Lambda.
	\end{equation}
	Combining this with equation \eqref{eq:Fy}, we arrive at equation \eqref{eq:finally}.
%	\todo{Will: I am working on pictures for this proof.
%
%Tony. OK, good.}
\end{proof}
\begin{corollary}
	Suppose now that we have a hybrid periodic orbit that intersects $S$ $n$ times. Let $x_1,\ldots,x_n \in S$ and $y_i=\Delta(x_i)$. Additionally, let $\gamma_i$ be the flow that connects $y_i$ to $x_{i+1}$, i.e. $\gamma_i(0)=y_i$ and $\gamma_i(T_i)=x_{i+1}$. Also, let $\alpha_i$ be the angle $f(y_i)$ makes with $\Delta(S)$ and $\theta_i$ be the angle $f(x_i)$ makes with $S$. Then, the derivative of the Poincar\'e map is given by
	\begin{equation}
	(P^n)'(x_1) = \prod_{i=1}^n \, \Delta'(x_i) \frac{\lVert f(y_i)\rVert}{\lVert f(x_i)\rVert} \frac{\sin\alpha_i}{\sin\theta_i} \, \exp \left( 
	\int_0^{T_i}\, \nabla \cdot f(\gamma_i(t))\, dt \right).
	\end{equation}
\end{corollary}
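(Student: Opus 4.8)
The plan is to view the $n$-fold return map $P^n$ as a composition of $n$ single-flight Poincar\'e maps and to reduce the computation of $(P^n)'(x_1)$ to $n$ applications of Theorem \ref{th:stability} via the chain rule. First I would observe that the derivative formula established in Theorem \ref{th:stability} is in fact a purely \emph{local} statement about one flight: the argument there uses only the impact at the start point, the flow $\gamma$ along a single arc, the two speeds, and the two transversality angles at the endpoints of that arc. The hypothesis that the orbit returns to $S$ exactly once is invoked solely to identify the start point with the endpoint. Consequently, for the single-flight return map $P$ (well-defined and $C^1$ near each $x_i$ by Theorem \ref{thm:smooth}), whose image is the next intersection point $x_{i+1}=P(x_i)$, the same computation yields
\begin{equation*}
P'(x_i) = \Delta'(x_i)\,\frac{\lVert f(y_i)\rVert}{\lVert f(x_{i+1})\rVert}\,\frac{\sin\alpha_i}{\sin\theta_{i+1}}\,\exp\left(\int_0^{T_i}\nabla\cdot f(\gamma_i(t))\,dt\right),
\end{equation*}
where $x_{i+1}$ is the landing point of the $i$-th arc and $\theta_{i+1}$ is the angle $f(x_{i+1})$ makes with $S$.

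Next, since the orbit is periodic it meets $S$ at the cyclically ordered points $x_1,\ldots,x_n$ with $P(x_i)=x_{i+1}$ and $x_{n+1}=x_1$. Writing $P^n = P\circ\cdots\circ P$ and using that under (C.1) and (C.4) the section $S$ is one-dimensional, so all the maps are scalar, the chain rule gives
\begin{equation*}
(P^n)'(x_1) = \prod_{i=1}^n P'(x_i) = \prod_{i=1}^n \Delta'(x_i)\,\frac{\lVert f(y_i)\rVert}{\lVert f(x_{i+1})\rVert}\,\frac{\sin\alpha_i}{\sin\theta_{i+1}}\,\exp\left(\int_0^{T_i}\nabla\cdot f(\gamma_i(t))\,dt\right).
\end{equation*}
Finally I would eliminate the shifted indices. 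The only quantities carrying the label $i+1$ are $\lVert f(x_{i+1})\rVert$ and $\sin\theta_{i+1}$; as $i$ ranges over $1,\ldots,n$, the index $i+1$ ranges over $2,\ldots,n,1$, which is the same set modulo the periodicity $x_{n+1}=x_1$. Hence $\prod_{i=1}^n \lVert f(x_{i+1})\rVert^{-1} = \prod_{i=1}^n \lVert f(x_i)\rVert^{-1}$ and $\prod_{i=1}^n (\sin\theta_{i+1})^{-1} = \prod_{i=1}^n (\sin\theta_i)^{-1}$; substituting these into the product yields exactly the claimed formula.

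The main obstacle I anticipate is bookkeeping rather than analysis: one must justify that Theorem \ref{th:stability} genuinely applies flight-by-flight, i.e.\ that its proof never used the single-intersection hypothesis beyond closing the loop, and one must keep the start-point data $(\Delta'(x_i),\ \lVert f(y_i)\rVert,\ \alpha_i,\ T_i)$ scrupulously separate from the endpoint data $(\lVert f(x_{i+1})\rVert,\ \theta_{i+1})$ so that the cyclic relabeling is applied to precisely the right factors. A secondary point to verify is that each $\theta_i$ and $\alpha_i$ avoids integer multiples of $\pi$, as required by Theorem \ref{th:stability}; this is guaranteed by the transversality hypothesis (C.5) holding at every intersection of the orbit with $S$ and with $\Delta(S)$.
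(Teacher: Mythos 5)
Your proof is correct and follows the intended route: the paper states this corollary without its own proof, and the implicit argument is exactly your decomposition of $P^n$ into $n$ single-flight maps (each governed by the local computation of Theorem \ref{th:stability}), the scalar chain rule on the one-dimensional section, and the cyclic reindexing $x_{n+1}=x_1$ that converts the landing-point factors $\lVert f(x_{i+1})\rVert$ and $\sin\theta_{i+1}$ into the stated $\lVert f(x_i)\rVert$ and $\sin\theta_i$. Your explicit bookkeeping of start-point versus endpoint data is precisely the care the paper's compressed statement glosses over.
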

This gives a precise test for determining the stability of planar hybrid orbits. We would like to extend this to higher dimensions, but we can only calculate $\det P'(x_0)$ and not its individual eigenvalues.

\begin{theorem}
	Assume that $\X=\mathbb{R}^n$ and that $\gamma(\cdot)$ is a periodic orbit intersecting $S$ once with $x\in S$ and $y=\Delta(x)$ and period length $T$. Let $\alpha$ and $\theta$ be described as in Theorem \ref{th:stability}. If $\gamma$ is stable, then
	\begin{equation}\label{eq:determinant}
	\left|\det \left( \Delta'(x) \right) 
	\frac{\lVert f(y)\rVert}{\lVert f(x)\rVert} \frac{\sin\alpha}{\sin\theta} \cdot 
	\exp\left( \int_0^T \, \nabla\cdot f(\gamma(t)) \, dt\right)\right|\leq 1.
	\end{equation}
\end{theorem}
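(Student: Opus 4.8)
The plan is to recognize that the quantity inside the absolute value in \eqref{eq:determinant} is precisely $\det P'(x)$, the determinant of the derivative of the hybrid return map at the fixed point $x$, and then to invoke the discrete stability criterion. Since $S$ has codimension one, $P'(x)$ is an $(n-1)\times(n-1)$ matrix whose eigenvalues $\lambda_1,\dots,\lambda_{n-1}$ are the nontrivial Floquet multipliers of $\gamma$. If $\gamma$ is stable then $x$ is a stable fixed point of $P$, so no eigenvalue of $P'(x)$ can have modulus exceeding $1$ (an eigenvalue with $|\lambda|>1$ would produce an expanding direction of the linearization and hence instability of the fixed point). Consequently $|\det P'(x)| = \prod_{i=1}^{n-1}|\lambda_i|\le 1$, which is exactly \eqref{eq:determinant} once the determinant has been identified. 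Thus the whole content reduces to generalizing the scalar computation of Theorem \ref{th:stability} to a determinant in dimension $n$.

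For the determinant I would repeat the geometric argument of Theorem \ref{th:stability} with areas replaced by $(n-1)$- and $n$-dimensional volumes. Write $y=\Delta(x)$, let $\Phi(T):=\partial\varphi_T/\partial y$ be the state-transition matrix of the continuous flow along $\gamma$, and factor $P'(x)$ as the composition
$$T_xS \xrightarrow{\ \Delta'(x)\ } T_y\Delta(S) \xrightarrow{\ \Phi(T)\ } \mathbb{R}^n \xrightarrow{\ \pi\ } T_xS,$$
where $\pi$ is the projection onto $T_xS$ along the flow direction $f(x)$. Choosing a basis $v_1,\dots,v_{n-1}$ of $T_xS$, the modulus of the determinant is the ratio of the $(n-1)$-volume of the images to that of the $v_i$. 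The key identities are: (i) appending $f(x)$ converts the $(n-1)$-volume in $T_xS$ into an $n$-volume with a factor $\lVert f(x)\rVert\sin\theta$, and likewise appending $f(y)$ to vectors in $T_y\Delta(S)$ produces a factor $\lVert f(y)\rVert\sin\alpha$ (here the hypotheses that $\theta,\alpha$ are not integer multiples of $\pi$ guarantee these factors are nonzero); (ii) since subtracting a multiple of $f(x)$ leaves an $n$-volume unchanged when $f(x)$ is one of the spanning vectors, the projection $\pi$ may be dropped once $f(x)$ is appended; (iii) the flow maps velocities to velocities, $f(x)=\Phi(T)f(y)$, so after appending $f(x)$ one pulls the whole parallelepiped back through $\Phi(T)$ and extracts the scalar $\det\Phi(T)$.

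Assembling (i)--(iii) yields
$$|\det P'(x)| = |\det\Delta'(x)|\cdot|\det\Phi(T)|\cdot\frac{\lVert f(y)\rVert\sin\alpha}{\lVert f(x)\rVert\sin\theta},$$
and Lemma \ref{perkodiv} identifies $\det\Phi(T)=\exp\!\big(\int_0^T\nabla\cdot f(\gamma(t))\,dt\big)$, which is positive. This is exactly the modulus of the bracketed expression in \eqref{eq:determinant}, so combining it with the eigenvalue bound $\prod_{i}|\lambda_i|\le1$ from the first paragraph completes the proof.

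I expect the main obstacle to be the bookkeeping in the volume computation, specifically verifying (i) and correctly handling the interplay between the two non-orthogonal hyperplanes $S$ and $\Delta(S)$: the angles $\theta$ and $\alpha$ must be attached to the correct base-times-height decomposition of the parallelepipeds, and one must confirm that the projection $\pi$ contributes no factor beyond $\sin\theta$. The stability step is comparatively routine; its only subtlety is that ``stable'' is meant in the Lyapunov (non-asymptotic) sense, which still forces $|\lambda_i|\le1$ rather than a strict inequality.
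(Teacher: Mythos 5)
Your proposal is correct and follows essentially the same route as the paper: the paper's entire proof is the two-sentence observation that the bracketed expression equals $\det P'(x)$ and that a determinant of modulus greater than $1$ forces an eigenvalue of modulus greater than $1$, hence instability. Your volume computation merely supplies the justification for the determinant identity that the paper asserts without proof (as the $n$-dimensional extension of Theorem \ref{th:stability}), so there is no substantive difference in approach.
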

\begin{proof}
	Equation \eqref{eq:determinant} is equal to $\det P'(x)$. Thus, if the determinant is greater than 1, it must have an eigenvalue greater than 1 and the system is unstable.
\end{proof}
\begin{corollary}
	If the expression in \eqref{eq:determinant} has value is less than 1 and the orbit, $\gamma(t)$, is unstable, the point $x_0$ under $P$ must be a saddle type instability.
\end{corollary}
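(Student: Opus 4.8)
The plan is to recognize that, by the theorem immediately preceding, the quantity inside the absolute value in \eqref{eq:determinant} is precisely $\det P'(x_0)$, so the hypothesis reads $|\det P'(x_0)| < 1$. Since $P$ maps the codimension-one section $S$ to itself, $P'(x_0)$ is an $(n-1)\times(n-1)$ real matrix, and its determinant equals the product of its eigenvalues $\lambda_1,\dots,\lambda_{n-1}$, counted with multiplicity. Translating the stability of the orbit into spectral data, $\gamma$ is asymptotically stable exactly when every $|\lambda_i|<1$, and it is unstable when some $|\lambda_j|>1$. The goal is then to show that the combination of $|\det P'(x_0)|<1$ with instability forces the simultaneous presence of a contracting and an expanding eigendirection at $x_0$.

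First I would record the factorization $|\det P'(x_0)| = \prod_{i=1}^{n-1} |\lambda_i| < 1$. Next, because $\gamma$ is unstable, I select an eigenvalue $\lambda_j$ with $|\lambda_j| > 1$; this furnishes the unstable (expanding) direction. I would then argue by contradiction that a contracting direction must also exist: if every eigenvalue satisfied $|\lambda_i| \ge 1$, the product of the moduli would be at least $|\lambda_j| > 1$, contradicting $|\det P'(x_0)| < 1$. Hence some eigenvalue $\lambda_k$ has $|\lambda_k| < 1$, supplying a stable (contracting) direction. The coexistence of an eigenvalue inside and one outside the unit circle is exactly the definition of a saddle-type fixed point of $P$, which closes the argument.

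The argument is essentially a pigeonhole estimate on a product, so the routine computation is negligible; the only genuine obstacle is definitional bookkeeping. I would fix precise meanings of \emph{unstable} (at least one eigenvalue of modulus strictly greater than one) and \emph{saddle-type instability} (simultaneous presence of eigenvalues of modulus strictly below and strictly above one), and note that eigenvalues sitting exactly on the unit circle cause no trouble: each contributes a unit factor to the product and so cannot, on its own, prevent the strict inequality $|\det P'(x_0)|<1$ from forcing a truly contracting eigenvalue. Finally, I would flag that the statement is vacuous when $n=2$, where $P'(x_0)$ is a scalar and no saddle can occur, so the substantive content of the corollary resides in dimensions $n\ge 3$.
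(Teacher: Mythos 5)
Your argument is correct and is precisely the pigeonhole-on-the-product-of-eigenvalue-moduli reasoning the paper intends (the paper states this corollary without proof, as an immediate consequence of the preceding theorem's identification of \eqref{eq:determinant} with $\det P'(x)$). Your added observations about eigenvalues on the unit circle and the vacuity of the case $n=2$ are accurate and only sharpen the statement.
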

%%%%%%%%%%%%%%%%%%%%%%%%%%%%%%%%%%%%%%%%%%%%%%%%%%%%%%%%%%%%%%%%%%%%%%%%%%%%%%%%%%
\subsection{Example: Hybrid Van der Pol}
Consider the Van der Pol system
\begin{equation}
\begin{split}
\dot{x} &= y\\
\dot{y} &= \mu (1-x^2)y - x
\end{split}
\end{equation}
If we let $z=[x;y]$ and let $f$ be such that that the dynamics 
is given by $\dot{z}=f(z)$, then $\nabla\cdot f(z) = \mu(1-x^2)$. This allows us to cut up the state space as $P=\{ -1<x<1\}$ and $N=\{ -\infty < x < -1\} \cup \{ 1<x<\infty\}$. The divergence of $f$ is strictly negative on $N$ and strictly positive on $P$. Additionally, it is known that the stable limit cycle of this system intersects both $P$ and $N$; as is required by Dulac's criterion (see for instance \cite{strogatz2014nonlinear}, p. 204). As such, let us take $S=\{(x,y)\in\mathbb{R}^{2}|x=1\}$ because we know the continuous limit cycle intersects $S$. 
%\todo{Leo: I reworder $S$}
\subsubsection{Numerical Simulation}\label{sub:numerical}
Let $\mu=1$ and $\Delta(x,y) = (x,-1.5y)$. Let $z_0=[1;3]$.
\begin{figure}[h!]
	\includegraphics[scale=0.25]{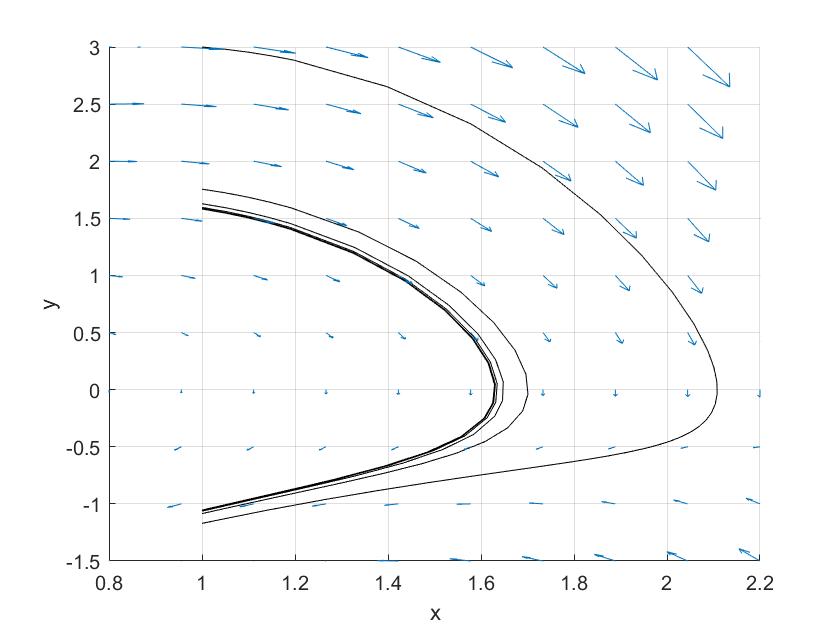}
	\caption{1000 cycles of the flow from \S\ref{sub:numerical}.}
\end{figure}
After running 100 cycles and seeing that the flow ends up being periodic, the initial and final $y$ values are:
\begin{equation}\label{eq:limit_ys}
\begin{array}{rr}
y^- = & -1.0498\\
y^+ = & 1.5747
\end{array}
\end{equation}
Now, we want to calculate the stability of this orbit. We use the following formula for the derivative of the Poincar\'e map:
\begin{equation}\label{eq:deriv}
P'(z) = \Delta'(y^-)\cdot\frac{\lVert f(y^+)\rVert}{\lVert f(y^-)\rVert}
\frac{\sin\alpha}{\sin\theta} \cdot \exp \left(
\int_0^T \, \nabla\cdot f(\gamma(t))\, dt\right)
\end{equation}
This can be interpreted as multiplying together the discrete part, the geometric part and the continuous part.
Numerically integrating over the limit cycle yields a derivative of
\begin{equation*}
|P'|=0.3338
\end{equation*}
\subsubsection{Testing Instability}\label{sub:inst}
Now, we will modify the impact map (while keeping the continuous flow and the impact surface fixed) to make the orbit unstable. We will do this by making the impact map be $\Delta(1,y)=(1,m(y-A)+B)$ where $A=y^-$ and $B=y^+$ as in equation \eqref{eq:limit_ys}. This allows us to control the derivative of $\Delta$ (that is, $m$) while keeping the orbit from changing. Using the results from equation \eqref{eq:deriv}, we see that the derivative is now
\begin{equation}
|P'(y^-)| = 0.2225|m|
\end{equation}
If we run the simulations for $m$ increasing in magnitude past $\approx4.4943$ the orbit should become unstable. Also, the sign of $m$ will determine the number of times the orbit intersects the impact surface.
$$\begin{array}{l|ccccc}
m   & -4.6 & -4.55 & -4.5 & -4.45 & -4.4 \\
\hline
y^+ & 1.6034  & 1.5898  & 1.5768  & 1.5747  & 1.5747 
\end{array}$$
%$$\begin{array}{l|rrrrr}
%m   & -5.0086 & -4.7586 & -4.5086 & -4.2586 & -4.0086\\
%y^+ & 1.6469 & 1.5849 & 1.5814 & 1.5814 & 1.5814
%\end{array}$$
If we let $m$ be positive, the resulting unstable periodic orbit will intersect the impact surface twice.
$$\begin{array}{l|ccccc}
m     & 4.4 & 4.45 & 4.5 & 4.55 & 4.6 \\
\hline
y^+_1 & 1.5747 & 1.5747 & 1.5059 & 1.3758 & 1.3091 \\
y^+_2 & 1.5747 & 1.5747 & 1.6475 & 1.8119 & 1.9132
\end{array}$$
%$$\begin{array}{l|rrrrr}
%m     & 4.0086 & 4.2586 & 4.5086 & 4.7586 & 5.0086\\
%y^+_1 & 1.5814 & 1.5814 & 1.4823 & 1.2561 & 1.1125\\
%y^+_2 & 1.5814 & 1.5814 & 1.6979 & 2.0460 & 2.3619
%\end{array}$$
\begin{figure}[h!]
	\includegraphics[scale=0.35]{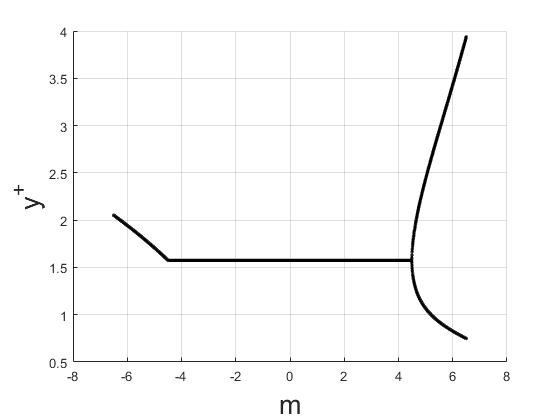}
	\caption{Displaying the locations of the jumps after performing 1000 iterations of the system in \S\ref{sub:inst}.}
\end{figure}
All of the numerics were performed with Matlab's ode45 differential equation solver, equation \eqref{eq:deriv} was integrated via the trapezoidal rule, and all tests ran for a duration of 1000 iterations to locate the steady-state.

\subsection{An Analytic Example}
Consider the continuous dynamics (in polar coordinates) 
\begin{equation}
\begin{split}
\dot{r}& = 1-r\\
\dot{\theta}&=1
\end{split}
\end{equation}
Under these continuous dynamics, for all points $x_0=(r_0,\theta_0)$, $\omega_c(x_0) = S^1$. Additionally, the flow of the system is
\begin{equation}\label{eq:flow}
\varphi_t(r_0,\theta_0) = \left( (r_0-1)e^{-t}+1, \theta_0 + t\right).
\end{equation}
The last notable feature is that the divergence is everywhere equal to -1, i.e. $\nabla\cdot f(r,\theta) \equiv -1$.
Let us consider the hybrid system where the impact map is the ray from the origin at angle $\alpha$, that is $S=\{(r,\theta)|\theta=\alpha\}$ and the impact map is given as
\begin{equation}\label{eq:impact}
\Delta(r,\alpha) = (\beta r,\gamma).
\end{equation}
%\todo{Leo: what do you mean by `` brute force''?\\
%Will: Solve directly without the use of \eqref{eq:deriv}.
%Leo: I used the word analytic, please check}
Let us now compute the Poincar\'e map both analytically and by equation \eqref{eq:deriv} to compare. We will assume that $0\leq \gamma < \alpha \leq 2\pi$. Then the time between all impacts is $\alpha-\gamma$. Using the fact that the time between impacts is constant and equations \eqref{eq:flow} and \eqref{eq:impact}, we obtain the Poincar\'e map
\begin{equation}
P(r_0) = \beta \left[ (r_0-1)e^{\gamma-\alpha}+1 \right].
\end{equation}
If $\beta e^{\gamma-\alpha}<1$, this yields a fixed point of
\begin{equation}
r_0^* = \frac{\beta \left[ e^{\gamma-\alpha}-1\right]}{\beta e^{\gamma-\alpha}-1}.
%\quad \text{if} ~~ \beta e^{\gamma-\alpha} < 1
\end{equation}
Then the derivative is
\begin{equation}\label{eq:p_derivative}
P'(r_0^*) = \beta e^{\gamma - \alpha}.
\end{equation}
Now, we compare with equation \eqref{eq:deriv}. Computing each of the three pieces,
\begin{equation}
\begin{split}
\Delta'(y^-) = \beta\\
\frac{\lVert f(y^+)\rVert}{\lVert f(y^-)\rVert}
\frac{\sin\alpha}{\sin\theta} = 1\\
\exp\left(\int_0^T \, \nabla\cdot f(\gamma(t))\, dt\right) = e^{\gamma-\alpha}.
\end{split}
\end{equation}
which matches up with equation \eqref{eq:p_derivative}.

\section{Poincar\'e-Bendixson theorem for 1-dimensional hybrid dynamical systems}
\label{sec:1dim}

In this section we contrast the above
results to a  Poincar\'e-Bendixson theorem for hybrid systems in one dimension.

\begin{lemma}\label{le:1}
	Let $S\subset\mathbb{R}$ be a finite set and $P:S\rightarrow S$. Then, for all $x\in S$ there exists $N\ne M$ large enough such that $g^N(x)=g^M(x)$.
	Specifically, $\omega_d(x)$ is a periodic orbit.
\end{lemma}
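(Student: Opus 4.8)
The plan is to reduce everything to a pigeonhole argument, since finiteness of $S$ forces the orbit of any point to become eventually periodic. Write $k := |S|$ for the (finite) cardinality of $S$ and fix $x\in S$. First I would consider the $k+1$ points $x, P(x), P^2(x),\ldots,P^k(x)$; all lie in $S$, which has only $k$ elements, so by the pigeonhole principle at least two of them coincide. This yields indices $0\le M < N \le k$ with $P^M(x)=P^N(x)$, which is exactly the asserted existence of $N\ne M$ with $P^N(x)=P^M(x)$ (here I read the $g$ of the statement as the map $P$).

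Next I would extract periodicity of the tail of the orbit. Setting $p:=N-M>0$ and applying $P^j$ to the equality $P^M(x)=P^{M+p}(x)$ gives $P^{M+j}(x)=P^{M+p+j}(x)$ for every $j\ge 0$. Hence the sequence $(P^n(x))_{n\ge M}$ is periodic with period dividing $p$, and the full orbit splits into a (possibly empty) pre-periodic tail $x,\ldots,P^{M-1}(x)$ followed by the cycle $C:=\{P^M(x),P^{M+1}(x),\ldots,P^{M+p-1}(x)\}$, on which $P$ acts as a cyclic permutation. Since $P^{M+mp}(x)=P^{M+(m+1)p}(x)$ for every $m\ge 0$, the pair of equal indices may be taken arbitrarily large, which accommodates the ``large enough'' phrasing of the statement.

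Finally I would identify $\omega_d(x)$ with $C$. Because $S$ is finite it carries the discrete topology, so a convergence $\lim_{n\to\infty}P^{N_n}(x)=y$ along a sequence $N_n\to\infty$ forces $P^{N_n}(x)=y$ for all large $n$; equivalently, $y\in\omega_d(x)$ if and only if $y=P^n(x)$ for infinitely many $n$. The points occurring infinitely often are precisely those in $C$, since each tail point $P^0(x),\ldots,P^{M-1}(x)$ is visited along at most one index while each cycle point is visited along an entire arithmetic progression of indices. Therefore $\omega_d(x)=C$, and $C$ is a periodic orbit of $P$, as claimed.

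The argument has no genuine obstacle; the only step requiring a little care is the last one, where one must invoke the discreteness of the finite set $S$ to convert the topological definition of $\omega_d$ into the combinatorial condition ``visited infinitely often,'' and then verify that the transient pre-periodic points are excluded from the limit set while exactly the cycle $C$ survives.
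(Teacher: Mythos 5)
Your proof is correct, and it takes a genuinely different (and more elementary) route than the paper's. The paper fixes $x$, forms the orbit sequence $x_n=P^n(x)$, invokes Bolzano--Weierstrass on the compact set $S$ to extract a convergent subsequence, and then uses the uniform separation of the finite set $S$ to conclude that the subsequence is eventually constant, producing two indices $N\ne M$ with $P^N(x)=P^M(x)$; it then simply declares the periodic orbit found, without spelling out the identification of $\omega_d(x)$. You instead apply the pigeonhole principle to the $|S|+1$ points $x,P(x),\dots,P^{|S|}(x)$, which is shorter, avoids any appeal to compactness or convergence, and yields the explicit bound $N\le|S|$ on where the repetition first occurs. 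You also do more than the paper in the final step: you carefully translate the topological definition of $\omega_d(x)$ into ``visited infinitely often'' via discreteness and verify that the pre-periodic tail is excluded while exactly the cycle $C$ survives, so that $\omega_d(x)=C$. The only cosmetic imprecision is the phrase that each tail point ``is visited along at most one index'': a tail \emph{value} could coincide with a cycle value and hence recur infinitely often, but in that case it lies in $C$ anyway, so the conclusion $\omega_d(x)=C$ is unaffected. The paper's Bolzano--Weierstrass phrasing is presumably chosen to echo the ``uniformly discrete'' hypothesis (S.1) of Theorem \ref{th:main}, but since a compact uniformly separated subset of $\mathbb{R}$ is necessarily finite, your combinatorial argument covers the same ground.
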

\begin{proof}
	Fix a $x\in S$. Define the sequence $\{x_n\}_{n\in\mathbb{N}}$ where $x_n = P^n(x)$. Since the set $S$ is compact, by Bolzano-–Weierstrass there exists a convergent subsequence, $\{x_{n_k}\}_{k\in\mathbb{N}}{\subset \{x_n\}_{n\in\mathbb{N}}}$. Call the limit $\overline{x}$. Since $S$ is uniformly separated, there exists a $K$ large enough such that for all $p\geq K$, $x_{n_p}=\overline{x}$. Take $N=n_K$ and $M=n_{K+1}$ and we have found our periodic orbit.
\end{proof}
%\todo{maybe ``we are done'' is not formal in the proof of a result, can you reword please? $\checkmark$}. 

Here, we prove a version of Poincar\'e-Bendixson for a much more general class of hybrid systems in one dimension. In this section, we drop the assumptions (H.1)-(H.6) and replace them with the following:
%\todo{Leo: Does A3 means S is  uniformly discrete?. maybe need to say $f$ is locally Lipschitz because is $C^1$?\\
%Will: No, (S.1) means $S$ is uniformly discrete. In the context of (A.1), $S$ is arbitrary. Is it not well-known that $C^1\implies$ Lipschitz?}
\begin{enumerate}
	\item[(A.1)] $\X\subset\mathbb{R}$ is open and connected.
	\item[(A.2)] $f:\X\rightarrow\mathbb{R}$ is $C^1$.
	\item[(A.3)] $S$ is a subset of $\mathbb{R}$.
	\item[(A.4)] $\Delta:S\rightarrow\mathbb{R}$.
\end{enumerate}
Under these considerably weaker assumptions (which requires the dimension restriction) we can prove the following theorem. Recall {$\N{\varepsilon}$ from} equation \eqref{eq:fixed_sets}.
\begin{theorem}\label{th:main}
	If either
	\begin{enumerate}
		\item[(S.1)] $S\subset\mathbb{R}$ is uniformly discrete, that is 
		$$\inf_{\substack{x,y\in S\\ x\ne y}} \! |x-y|=\delta>0.$$
		\item[(S.2)] The image of $\Delta$ is far from $S$ if we are away from a fixed point of $f$, that is for $\varepsilon>0$
		$$\inf_{x,y\in S\setminus \N{\varepsilon}} |\Delta(x)-y| = \eta(\varepsilon) >0.$$
	\end{enumerate}
	Then, if $R\subset\mathbb{R}$ is a forward invariant, compact set and for some $x\in R$ such that ${\omega_H(x)}\cap\fix(f)=\emptyset$, then $\omega_H(x)$ is a limit-cycle. Moreover, $\omega_H(x)\subset \overline{o_H^+(x)}$.
\end{theorem}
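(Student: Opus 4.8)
The plan is to reduce the hybrid dynamics to the discrete return dynamics on the impact points and to argue, in either case, that the orbit eventually cycles through only finitely many impacts, so that Lemma \ref{le:1} applies. First I would record that since $R$ is compact and forward invariant and $x\in R$, Proposition \ref{th:closed} gives that $\omega_H(x)$ is nonempty and compact; being disjoint from the closed set $\fix(f)$, it is bounded away from the fixed points, and the standard attraction property of $\omega$-limit sets shows that the tail of the orbit is eventually trapped in a neighborhood $U$ with $\overline U\subset\X\setminus\N{\varepsilon}$ for some $\varepsilon>0$. The crucial one-dimensional observation is that $\dot x=f(x)$ is monotone on any interval free of fixed points, so a purely continuous (non-impacting) tail would be monotone and bounded, hence convergent to a zero of $f$; since $\omega_H(x)\cap\fix(f)=\emptyset$ this is impossible, and therefore $o_H^+(x)$ must strike $S$ infinitely often.

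Under (S.1) the conclusion is then immediate: every impact of the trapped tail lies in $S\cap R$, and a uniformly discrete set meeting a compact set is finite. The return map therefore acts on a finite set of impact values, so Lemma \ref{le:1} shows the impact sequence is eventually exactly periodic. The continuous arcs joining consecutive impacts are then repeated verbatim, the hybrid orbit is eventually periodic, and $\omega_H(x)$ is precisely this limit cycle. The inclusion $\omega_H(x)\subset\overline{o_H^+(x)}$ is automatic, since every element of $\omega_H(x)$ is by definition a limit of orbit points.

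Under (S.2) the set $S$ need not be discrete, so I would instead exploit the separation $\eta(\varepsilon)$ to recover an effectively finite structure. Every impact $p_n$ of the trapped tail lies in $S\setminus\N{\varepsilon}$, and by (S.2) the post-impact point $\Delta(p_n)$ is at distance at least $\eta(\varepsilon)$ from every element of $S\setminus\N{\varepsilon}$; combined with a bound $0<m\le|f|\le M$ on $\overline U$, consecutive impacts are separated in time by at least $\eta(\varepsilon)/M$, which rules out Zeno behavior and makes the discrete return map genuinely defined along the tail. The plan is then to pass to the post-impact points, which lie in the compact set $\overline U$, extract via Bolzano--Weierstrass a convergent subsequence, and use the $\eta(\varepsilon)$-gap together with monotonicity of the continuous first-hit map to show that only finitely many distinct impact values can be visited infinitely often; Lemma \ref{le:1} then finishes exactly as before.

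I expect the genuine difficulty to be entirely inside case (S.2): establishing finiteness, or eventual periodicity, of the visited impacts when $S$ is merely a subset of $\mathbb{R}$ and $\Delta$ is not assumed continuous. Without continuity of $\Delta$ one cannot pass limits through the jump, and without closedness of $S$ the continuous first-hit map need not be continuous, so the separation hypothesis (S.2) must be made to do all the work of preventing the impacts from accumulating into a non-periodic invariant set. Controlling this is the main obstacle, and the monotonicity of the one-dimensional continuous flow is the feature I would lean on most heavily to compensate for the weak hypotheses on $S$ and $\Delta$.
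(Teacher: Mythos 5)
Your overall strategy is the paper's: trap the tail of the orbit away from $\fix(f)$, use one--dimensional monotonicity to force infinitely many impacts (this is the content of Lemmas \ref{le:2}--\ref{le:4}), reduce to a return map on a finite set of impact points, and invoke Lemma \ref{le:1}. Your case (S.1) is complete and matches the paper, and you are right that the final inclusion $\omega_H(x)\subset\overline{o_H^+(x)}$ is immediate from the definition of the $\omega$-limit set (the paper even gets the stronger statement that the orbit enters the cycle in finite time, since Lemma \ref{le:1} produces exact periodicity after finitely many impacts).

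The gap is exactly where you locate it: in case (S.2) you announce that ``the $\eta(\varepsilon)$-gap together with monotonicity of the continuous first-hit map'' yields finitely many impact values visited infinitely often, but you do not supply the argument, and Bolzano--Weierstrass on the post-impact points by itself only gives you \emph{one} pair of nearby post-impact points, not confinement of the whole impact sequence to a finite set on which Lemma \ref{le:1} can act. The paper closes this with a pigeonhole on the continuous arcs rather than on the post-impact points: each continuous arc runs from a post-impact point $a=\Delta(s)$ to the next impact point $b\in S$, and since $a$ is at distance at least $\eta$ from every point of $S\setminus\N{\varepsilon}$ while the flow is monotone, each such arc is an interval of length at least $\eta$ contained in the compact set $R$. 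Infinitely many intervals of length at least $\eta$ in a bounded set must overlap, and two overlapping arcs, by monotonicity and uniqueness of the first hit of $S$, terminate at the \emph{same} impact point $b$. Hence only finitely many distinct terminal impacts occur, the induced map $b_{k_{n+1}}=\mathcal{M}(b_{k_n})$ lives on a genuinely finite set, and Lemma \ref{le:1} applies. Your ingredients can be assembled into essentially this argument (two post-impact points within $\eta$ of each other have no point of $S\setminus\N{\varepsilon}$ between them, so they share their next impact; then cover the trapped region by finitely many balls of radius $\eta/2$), but as written the decisive finiteness step is missing rather than merely compressed. A minor further point: the lower bound on inter-impact times should use $\sup|f|$, as you do, not $\inf|f|$; the time bound is in any case only needed to exclude Zeno behavior, not for the finiteness argument itself.
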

First note that condition (S.1) is similar to (H.4) and condition (S.2) is similar to (H.6). Before we can prove this result, we need to go through some {preliminaries results given in the following lemmas.}
Do note, however, that Proposition \ref{th:closed} still holds for {this class of} HDSs, i.e. $\omega_H(x)$ is still a closed set.
\begin{lemma}\label{le:2}
	Let $R$ be a compact, forward invariant set. Fix $x\in R$. Then for all $\varepsilon>0$ there exists $T>0$ such that for all $t>T$, $\varphi_t^H(x)\in \mathcal{B}_\varepsilon(\omega_H(x))$.
\end{lemma}
\begin{proof}
	Fix $\varepsilon>0$. Suppose the for all $T>0$, there exists $t>T$ such that $\varphi_t^H(x)\not\in\mathcal{B}_\varepsilon(\omega_H(x))$. So let $T_n\rightarrow\infty$ and choose $t_n>T_n$ such that $\varphi_{t_n}^H(x)\not\in\mathcal{B}_\varepsilon(\omega_H(x))$. Then, the sequence $\{ \varphi_{t_n}^H(x)\}_{n\in\mathbb{N}}$ is far away from $\omega_H(x)$. But, because $R$ is compact, by Bolzano--Weierstrass, there exists a convergent subsequence, $\varphi_{t_{n_k}}^H(x)\rightarrow\overline{x}$. By the definition of $\omega_H(x)$, $\overline{x}\in\omega_H(x)$.
\end{proof}
\begin{lemma}\label{le:3}
	If $\fix(f)\cap{\omega_H(x)} = \emptyset$ and $x\in R$ as in Theorem \ref{th:main}, then 
	$$\text{dist} \left( o_H^+, \fix(f)\right) = \delta > 0.$$
	i.e. $o_H^+(x)\cap \N{\delta}=\emptyset$.
\end{lemma}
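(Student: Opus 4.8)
The plan is to split the forward orbit into an asymptotic tail, governed by the $\omega$-limit set, and a transient piece of finite length, governed by compactness. First I would record two structural facts. Since $R$ is compact and forward invariant and $x\in R$, Proposition \ref{th:closed} (which, as the text notes, still holds for this class of HDS) gives that $\omega_H(x)$ is nonempty and compact. Moreover $\fix(f)=f^{-1}(0)$ is closed because $f$ is continuous. The hypothesis $\omega_H(x)\cap\fix(f)=\emptyset$ then says a compact set and a disjoint closed set are separated, so their distance is strictly positive and I may set $2d:=\text{dist}(\omega_H(x),\fix(f))>0$.

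For the tail, I would apply Lemma \ref{le:2} with $\varepsilon=d$: there is a $T>0$ with $\varphi_t^H(x)\in\mathcal{B}_d(\omega_H(x))$ for every $t>T$. If $p$ is such a point, pick $w\in\omega_H(x)$ with $|p-w|<d$; then for any $z\in\fix(f)$ the triangle inequality gives $|p-z|\geq|w-z|-|p-w|>2d-d=d$, so $\text{dist}(\varphi_t^H(x),\fix(f))\geq d$ for all $t>T$. Thus the whole tail $\{\varphi_t^H(x):t>T\}$ stays at distance at least $d$ from $\fix(f)$.

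It then remains to bound the transient $K:=\{\varphi_t^H(x):0\leq t\leq T\}$ away from $\fix(f)$. Here I would use that, under (S.1) or (S.2), the flow is non-Zeno, so only finitely many impacts occur on $[0,T]$ and $K$ is a finite union of closed continuous arcs, hence compact. If $K$ met $\fix(f)$, say $\varphi_{t^*}^H(x)=q\in\fix(f)$ with $q\notin S$, then by uniqueness of solutions through an equilibrium the continuous dynamics would hold the state at $q$ for all later times, forcing $q\in\omega_H(x)$ and contradicting the hypothesis. Hence $K$ is a compact set disjoint from the closed set $\fix(f)$, so $\text{dist}(K,\fix(f))=:d'>0$. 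Setting $\delta:=\min(d,d')>0$ yields $\text{dist}(o_H^+(x),\fix(f))\geq\delta>0$, equivalently $o_H^+(x)\cap\N{\delta}=\emptyset$.

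The main obstacle is the transient step, because the hybrid flow need not be continuous across impacts, so a finite-time approach to $\fix(f)$ cannot be excluded by bare continuity as in the continuous-time appendix. The trapping argument disposes of an equilibrium lying off the impact surface, and the non-Zeno hypothesis prevents the orbit from accumulating on $\fix(f)$ through infinitely many jumps in bounded time. The borderline case of a fixed point on $S$ is the one I would treat most carefully: I expect to argue that such a point cannot be approached at bounded times without the associated impact times converging, which the discreteness enforced by (S.1)/(S.2) forbids.
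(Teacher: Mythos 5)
Your decomposition --- a tail $\{\varphi_t^H(x):t>T\}$ controlled by Lemma \ref{le:2} together with the positive distance between the compact set $\omega_H(x)$ and the closed set $\fix(f)$, plus a finite-time transient handled by compactness --- is exactly the paper's strategy, and your first two steps match its proof. The gap is in the transient step. You assert that $K=\{\varphi_t^H(x):0\le t\le T\}$ is a finite union of \emph{closed} arcs and hence compact; but under the paper's convention $\varphi_{t_0}^H(x)=x^+=\Delta(x^-)$, each arc between impacts is half-open: it contains its post-impact left endpoint but \emph{not} the pre-impact point on $S$. So $K$ is not closed (the paper says this explicitly), and ``compact disjoint from closed implies positive distance'' does not apply to $K$ itself. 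The dangerous points are exactly the break points in $\overline{K}\setminus K$. Your final paragraph flags this case but proposes the wrong mechanism: discreteness of $S$ under (S.1)/(S.2) and convergence of impact times are beside the point, since a single break point is already an accumulation point of the orbit along one arc as $t\to t_0^-$, with no accumulation of impact times involved.

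The missing idea --- the one sentence the paper's proof actually turns on --- is that a $C^1$ flow cannot reach an equilibrium in finite time. If a break point $p=\lim_{t\to t_0^-}\varphi_t(y)$ belonged to $\fix(f)$, then either the preceding arc is constant at $p$ (by backward uniqueness for $\dot z=f(z)$) or the continuous flow only approaches $p$ asymptotically and never actually impacts there; in either case $p\in\omega_H(x)$, contradicting the hypothesis $\omega_H(x)\cap\fix(f)=\emptyset$. With that observation, $\overline{K}$ (i.e., $K$ together with its finitely many break points) is a compact set disjoint from the closed set $\fix(f)$, and your conclusion $\delta=\min(d,d')>0$ goes through. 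Your trapping argument for a fixed point lying off $S$ is fine, and the remainder of the write-up is consistent with the paper.
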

%\todo{Leo: I changed the phrase, please check}
\begin{proof}
	Because $f$ is {a $C^1$ function and therefore} a Lipschitz function, $\fix(f)$ is a closed set. Since $\omega_H(x)\subset R$, ${\omega_H(x)}$ is compact. This implies that since $\fix(f)$ and $\omega_H(x)$ are disjoint, they are uniformly separated. So there exists an $\varepsilon>0$ such that $\N{\varepsilon}\cap \omega_H(x)=\emptyset$. By Lemma \ref{le:2} for $T>0$ large enough, all $t>T$ we have $\varphi_t^H(x)\in \mathcal{B}_{\varepsilon/2}(\omega_H(x))$. This tells us that for sufficiently large times, the forward orbit of $x$ is far away form $\fix(f)$. So, we just need to examine the orbit up to time $T$. Call the set $o_H^T(x) = \{ \varphi_t^H(x) : t\in [0,T]\}$. We know that $o_H^T(x)$ is disjoint from $\fix(f)$, but because $o_H^T(x)$ is not closed, we can't say for sure that it is uniformly distant. The only points that can cause trouble are the points close to $o_H^T(x)$ but not in the set. The only points that fit this bill are the break points. However, if one of the break points of the flow is a fixed point of $f$, the flow would approach it asymptotically and thus the limit set would be that point. %\Lightning
\end{proof}
%\todo{Do you think that the red part is enough only in words or maybe need to be shown with precision? Same for Lemma \ref{le:4}\\
%I think that is fine. This is a consequence of uniqueness of solutions, so maybe cite that.
%Leo:Ok}
It is interesting to note that because both Lemmas \ref{le:2} and \ref{le:3} do not require assumption (A.1), they still hold for HDS as defined by definition \ref{def:smooth_hybrid}.
\begin{lemma}\label{le:4}
	Let $f$, $S$, $\Delta$, $R$, and $x$ be as in Theorem \ref{th:main}. Then, for all $y\in o_H^+(x)$ there exists a time, $t_0$, such that $\varphi_{t_0}(x)\in S$.
\end{lemma}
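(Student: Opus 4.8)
The plan is to read the conclusion as asserting that the continuous flow launched from $y$ reaches the impact surface in finite time, i.e. $\varphi_{t_0}(y)\in S$ for some $t_0\ge 0$, and to establish this by contradiction using one-dimensionality together with Lemma \ref{le:3}. First I would record that, since $y\in o_H^+(x)$, writing $y=\varphi_s^H(x)$ and invoking the semigroup property of the hybrid flow gives $\varphi_t^H(y)=\varphi_{s+t}^H(x)\in o_H^+(x)$ for every $t\ge 0$. Hence the entire forward hybrid orbit issuing from $y$ lies inside $o_H^+(x)$, and in particular inside the compact, forward invariant set $R$.

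Next I would suppose, for contradiction, that the continuous flow $\varphi_t(y)$ never meets $S$ for $t\ge 0$. Then no impact is ever triggered along this trajectory, so the hybrid flow coincides with the continuous flow, $\varphi_t(y)=\varphi_t^H(y)$, and the previous paragraph shows $\varphi_t(y)\in R$ for all $t\ge 0$. By Lemma \ref{le:3} the orbit stays uniformly away from the fixed points, $o_H^+(x)\cap \N{\delta}=\emptyset$ for some $\delta>0$, so $\varphi_t(y)$ remains in the compact set $K:=\{z\in R:\ \mathrm{dist}(z,\fix(f))\ge\delta\}$. On $K$ the continuous function $|f|$ is strictly positive, since a zero of $f$ would be a fixed point and therefore at distance $0$ from $\fix(f)$; it thus attains a positive minimum $c>0$, giving $|f(\varphi_t(y))|\ge c$ for all $t\ge 0$.

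Finally I would exploit the dimensional restriction. Because $f(\varphi_t(y))$ is continuous in $t$ and never vanishes, the intermediate value theorem forces it to keep a constant sign; assuming without loss of generality $f(\varphi_t(y))\ge c$, integrating $\frac{d}{dt}\varphi_t(y)=f(\varphi_t(y))\ge c$ yields $\varphi_t(y)\ge y+ct$, which escapes the bounded set $R$ in finite time and contradicts $\varphi_t(y)\in R$ for all $t$. Hence the continuous flow from $y$ must strike $S$ at some finite $t_0$. The step I expect to be the crux is the reduction inside the contradiction hypothesis: one must observe that ``never hitting $S$'' makes the hybrid and continuous flows agree, which is precisely what allows the hybrid forward-invariance of $R$ to confine the purely continuous trajectory; after that, the monotone-escape estimate is elementary and is where one-dimensionality is indispensable, as it guarantees a single persistent direction of motion. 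I would also emphasize that this argument uses only Lemma \ref{le:3} and compactness, and not the separation hypotheses (S.1)--(S.2), which enter only later in the proof of Theorem \ref{th:main}.
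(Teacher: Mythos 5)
Your proposal is correct and follows essentially the same route as the paper: assume the continuous flow from $y$ never meets $S$, note the hybrid and continuous orbits then coincide and stay in the compact set $R$, invoke the uniform separation from $\fix(f)$ (Lemma \ref{le:3}) to bound the speed below, and use one-dimensionality to get monotone escape, contradicting compactness. Your write-up is in fact more careful than the paper's (which terse\-ly cites Lemma \ref{le:2} where the relevant separation statement is really Lemma \ref{le:3}), and your reading of the conclusion as $\varphi_{t_0}(y)\in S$ matches the paper's evident intent.
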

\begin{proof}
	Assume not. Then, $o_H^+(y)$ never jumps. So we can replace it with $o^+(y)$.
	But, by Lemma \ref{le:2}, $o^+(y)$ is uniformly far from $\fix(f)$. So the flow of $y$ is either monotonically increasing or decreasing for all time with a speed bounded away form zero. This means that $y$ must approach either $+\infty$ or $-\infty$ as time approaches infinity. This contradicts the assumption that $o^+(y)$ is confined to a compact set. %\Lightning
\end{proof}

\begin{proof}[Proof of Theorem \ref{th:main}]
	First, let us assume that condition (S.1) holds. Then there exist finitely many points inside $R\cap S$. Label these points in ascending order $s_1,\ldots,s_n$. Define the set $E:=\{s\in R\cap S | \Delta^n (s) \in R\cap S, \forall n\}$. Then, since $E$ is a finite set with discrete dynamics by Lemma \ref{le:1}, $x\in E$ must eventually be a fixed point or a periodic orbit. So, if $x\in E$ then $\omega_H(x)$ is a periodic orbit. Additionally, if there exists any time where the orbit of $x$ intersects $E$, then $\omega_H(x)$ is a periodic orbit. So, let's assume that $o_H^+(x)\cap E=\emptyset$.\\
	Without loss of generality, let $x_0\not\in S$. 

%\todo{Tony: I am not sure what the next line means. 'by either...'' refers to?}

Then, by Lemma \ref{le:4}, there exists a point $s_{k_0}\in R\cap S$ such that the flow $\varphi_{t_0}(x) = s_{k_0}$. Now, let $x_1 := \Delta(s_{k_0})$ and let $s_{k_1}$ be the impact point $x_1$ gets mapped to. This gives dynamics on the impact points,
	$$s_{k_{n+1}} = \mathcal{M}(s_{k_n}).$$

%\todo{Tony: I assume ``of''  below is ``or'', but I am not sure 
%I understand the conclusion.}

	But since there are only finite many $s_j$'s, we must either end up with a periodic orbit or a fixed point (Lemma \ref{le:1}). Thus $\omega_H(x)$ is a limit cycle.\\
	
	Now, assume condition (S.2) holds. Since ${\omega_H(x)}$ contains no fixed points, $o_H^+(x)$ is uniformly far from roots of $f$ (Lemma \ref{le:3}). Let us rename the set $R$ to be $R=\overline{o_H^+(x)}$ (which is closed). Then, 
	$$R\cap S = (R\cap S)\setminus \N{\delta} $$ 
	with $\N{\varepsilon}$ as in equation \eqref{eq:fixed_sets}.

	So, condition (2) tells us that there exists some positive $\eta$ such that
	$$\inf_{x,y\in R\cap S} \! d(\Delta(x),y) = \eta >0.$$
	Additionally, let $\xi := \displaystyle \inf_{x\in R} |f(x)|$. Then, the minimal time between consecutive impacts is bounded below by $\eta/\xi$. By concatenating the smooth dynamics between impacts and using Lemma \ref{le:4}, the orbit looks like
	$$o_H^+(x) = \bigsqcup_{n=0}^\infty \; [a_{k_n},b_{k_{n+1}}).$$
	But each interval has a minimal length of $\eta$ and therefore since $R$ is compact, they must eventually intersect. Additionally, if $[a_j,b_j)\cap[a_i,b_i)\ne\emptyset$ then $b_j=b_i$. This shows that there only may exist finitely many distinct $b_{k_n}$'s. This allows us to define dynamics on a finite set,
	$$b_{k_{n+1}} = \mathcal{M}(b_{k_n}).$$
	and thus, a periodic orbit of $\mathcal{M}$ must exist (Lemma \ref{le:1}). Therefore $\omega_H^+(x)$ is a limit-cycle.\\
	
	Since we construct a periodic orbit via Lemma \ref{le:1} for both cases (S.1) and (S.2), we point out that Lemma \ref{le:1} states that we hit the periodic orbit after finitely many impacts. Thus, the forward orbit enters $\omega_H(x)$ at some finite time and $\omega_H(x)\subset \overline{o_H^+(x)}$.
\end{proof}

%\todo{Leo: Would be nice add an example, at least toy example-theoretical}

%%%%%%%%%%%%%%%%%%%%%%%%%%%%%%%%%%%%%%%%%%%%%%%%%%%%%%%%%%%%%%%%%%%%%%%%%%%%%%%%%%
%\appendix
\section{Application to periodic walking: the rimless wheel}\label{sec:examples}

The rimless wheel is a one-degree-of-freedom hybrid mechanical system in which the guard is reached when the swinging spoke makes contact with the inclined plane (see Figure \ref{fig:rw_schematic}. For a rimless wheel rolling along an inclined plane an analytically computable stable limit cycle exits \cite{Coleman1998}.

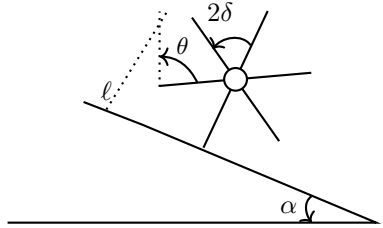
\begin{figure}[h!]
\centering
  \begin{tikzpicture}[scale=1]
    %\draw[-,ultra thick] (0,2) to (2,1);
    %\draw[dashed, thick] (2,1) -- (6.2,-0.8)-- (4.85,-.8);
    \draw[thick] (1,0.8) to (1.77,0.5) -- (4.85,-0.8)--(0,-0.8);
    \draw[thick] (3,1.1) circle (.15cm);
    \spoke{(3.0,1.1)}{-85};
    \spoke{(3.0,1.1)}{-25};
    \spoke{(3.0,1.1)}{35};
    \spoke{(3,1.1)}{95};
    \spoke{(3,1.1)}{155};
    \spoke{(3,1.1)}{-145};
  \draw[dotted,thick] (2,1) -- (2,2);
 %   \draw[dotted, thick] (2,1) -- (2.63,0.12);
   % \draw[->,thick] (2.3, 0.6) to [out=-145, in =115] (2.2,0.3) ;
   % \node at (2.0,0.6) {$\theta$};
    \draw[->,thick] (4,-.45) to  [out=-145, in =125] (4,-0.8);
    \node at (3.7,-0.6) {$\alpha$};
    \draw[->, thick] (3.2,1.55) to [out=125, in =55] (2.7,1.5);
    \node at (2.8,2) {$2\delta$};
    \draw[->,thick] (2.5,1.05) to [out=115, in =0] node[pos=0.5, above]{\small $\theta$} (2,1.4) ;
    %\draw[dotted,thick] (1.77,0.5) -- (1.15,.765);
    \draw[dotted, thick] (1.3,.7)  -- node [pos=0,above] {$\ell$} (2.1,2);
   % \draw[dotted,thick] (1.77,0.5) -- (1.15,.765);
    %\draw[<->] (1.3,.7)  -- node [pos=0,above] {$\ell$} (1.53,1.21);
  \end{tikzpicture}
  \caption{The rimless wheel.}
  \label{fig:rw_schematic}
\end{figure}

For this system let $x=(\theta,\dot{\theta})$, the continuous dynamics are given by equations \eqref{eq:wheel_cont} and \eqref{eq:wheel_disc} below (see {\cite{Coleman1998} and } \cite{saglam2014lyapunov} for an in depth formulation of this problem). {We  assume  the  mass $m$ is  lumped  into  the center  of  the  robot,  the  length  of  each  leg  is  given  by $\ell$, and each  inter-leg  angle is $2\delta= \frac{2\pi}{N}$, with $N$ being the number of legs}. Here, $\delta$ is the angle the leg makes with the ground when it lifts off and $\alpha$ is the grade of the slope the passive walker is walking down.
\begin{equation}\label{eq:wheel_cont}
\dot{x} = f(x) = \left[\begin{array}{c} x_2 \\ \zeta \sin(x_1) \end{array}\right],  \quad\zeta = g/\ell
\end{equation}
The impact surface is given by $S = \{x_1 = -\delta-\alpha\}$ and the impact map is
\begin{equation}\label{eq:wheel_disc}
\Delta(x) = \left[ \begin{array}{c}
\delta-\alpha \\
\cos(2\delta)x_2 \end{array}\right].
\end{equation}

By applying Theorem \ref{th:HPB}, all the smooth hybrid assumptions (H.1)-(H.6) are satisfied as well as points (C.1), (C.2), and (C.4). The transversality condition, (C.5), is satisfied as long as the trajectory stays away from the origin. To find the forward invariant compact set free of fixed points we do an energy balance. 

The (potential) energy gained over a single swing is
\begin{equation}\label{eq:potential}
\Delta P = 2\ell g\sin\delta\sin\alpha.
\end{equation}
While the amount of (kinetic) energy lost at impact is
\begin{equation}\label{eq:kinetic}
\Delta V = \frac{1}{2}(\ell x_2^-)^2\left( 1-\cos^2 2\delta \right).
\end{equation}
Call the total energy of the system $E$. If $\Delta V > \Delta P$, then $E(P(x)) < E(x)$. And if $\Delta V < \Delta P$, then $E(P(x))>E(x)$. This is how we will locate a forward invariant compact (FIC) set.

Clearly, for $x_2^-$ large enough, more kinetic energy will be lost through impacts than is acquired over the swing phase. The remaining question is for $x_2^-$ being small enough that we gain more energy. 

If $\delta > \alpha$, then we can fail to swing forward. In this case, we can calculate the minimum velocity, $x_2$, needed at the beginning of the swing phase to make it to the next one. 
\begin{equation}
(x_2^+)^2 > 2\zeta \left( 1- \cos(\delta-\alpha)\right),\quad
(x_2^-)^2 > \frac{2\zeta\left( 1- \cos(\delta-\alpha)\right)}{\cos^2(2\delta)}
\end{equation}
If we start the swing with this velocity and by equations \eqref{eq:potential} and \eqref{eq:kinetic} we gain energy, then
\begin{equation}
\Delta V = \frac{\ell^2}{2}\left( 1-\cos^22\delta\right) \left(
\frac{2\zeta\left( 1- \cos(\delta-\alpha)\right)}{\cos^2(2\delta)} \right).
\end{equation} Therefore, if $\delta>\alpha$ and 
	\begin{equation}\label{eq:existance_walking}
	2\sin\delta\sin\alpha > 
	\left( 1-\cos^22\delta\right) \left(
	\frac{\left( 1- \cos(\delta-\alpha)\right)}{\cos^2(2\delta)} \right),
	\end{equation}
	then there exists at least one periodic orbit that intersects the impact surface either once or twice.
	
In Figure \ref{fig:stability_regions} (left) we show the parameters $\alpha$ and $\delta$ where a stable limit cycle exists as well as (right) a trajectory and the region of attraction for the  choice set of parameters $\delta=\pi/10$, $\alpha=\pi/30$, and $\zeta=9.8$.

\begin{figure}[h!]
	\centering
	\begin{subfigure}[t]{0.45\textwidth}
		\includegraphics[width=\textwidth]{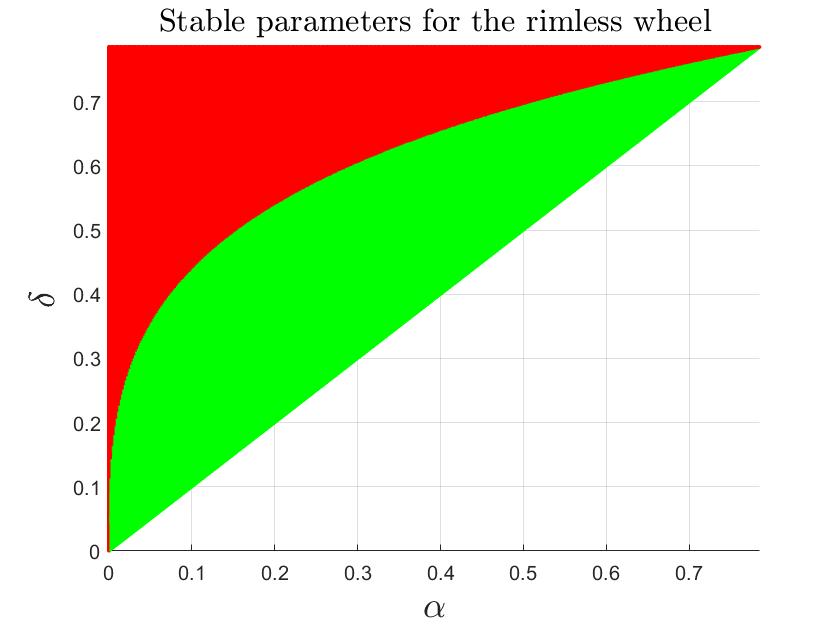}
		%\caption{The green region indicates values of $\alpha$ and $\delta$ where there exists a limit cycle as predicted by equation \eqref{eq:existance_walking}.}
	\end{subfigure}
	\quad
	\begin{subfigure}[t]{0.45\textwidth}
		\includegraphics[width=\textwidth]{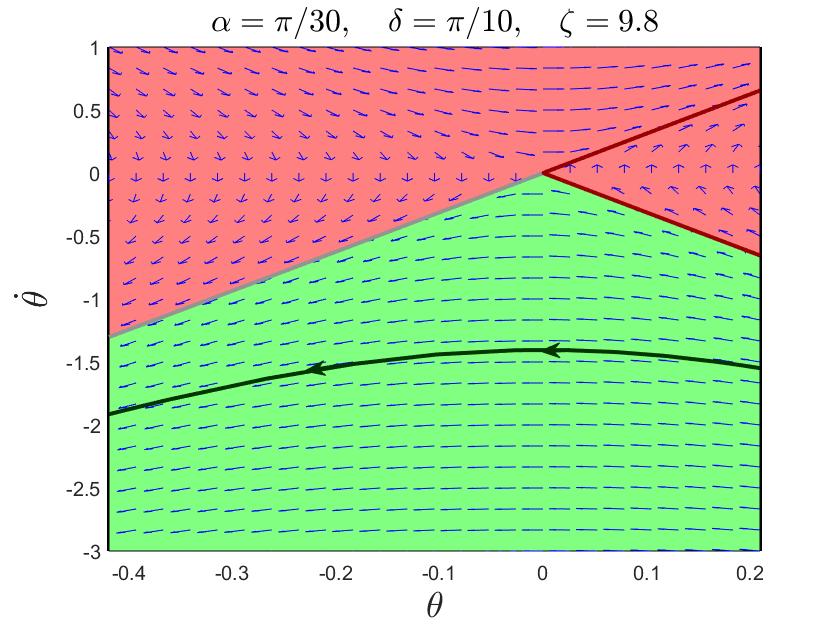}
		%\caption{The green region is the domain of attraction for the limit cycle, whose existence is guaranteed by equation \eqref{eq:existance_walking}.}
	\end{subfigure}
\caption{Left: The green region indicates values of $\alpha$ and $\delta$ where there exists a limit cycle as predicted by equation \eqref{eq:existance_walking}. Right: The green region is the domain of attraction for the limit cycle, whose existence is guaranteed by equation \eqref{eq:existance_walking}.}
\label{fig:stability_regions}
\end{figure}

%\todo{Leo: I removed the theorem and included as a conclusion in the example. I think more appropriate since all theorem we wrote are with proof not as in the example that the result comes from a computational conclusion. What do you think guys?. Also, I reworded some parts of the example and changed the notation $l$ by $\ell$\\
%Will: That's fine by me.}
%\todo{Leo: Will, can you draw some periodic solutions for some choice of parameter to see how they looks compared with the existing ones? Also, what do you think about a plot of the phase portrait and switching curves?. See for instance, Regions of Attraction for Hybrid Limit Cycles of Walking RobotsIan R. Manchester, Mark M. Tobenkin, Michael Levashov, Russ Tedrake\\
%Will: I will draw that soon.}

%\todo{Leo: Can you say something about the pictures. A phrase to close the example like: In Figure 6 we show.... In the case when $alpha, delta, zeta$ are ...we show in Figure 7.... 
%
%Leo: Is okay to you if the pictures are both in horizontal, in the same line, one next to the other, with less size} 
%\todo{Leo: 2D spring loaded inverted pendulum too? I can work the example, but I think that with the rimless spoke is enough and make the paper more concise if only one application is shown\\
%Will: I think that will make the paper too long.}
%%%%%%%%%%%%%%%%%%%%%%%%%%%%%%%%%%%%%%%%%%%%%%%%%%%%%%%%%%%%%%%%%%%%%%%%%%%%%%%%%%
%%%%%%%%%%%%%%%%%%%%%%%%%%%%%%%%%%%%%%%%%%%%%%%%%%%%%%%%%%%%%%%%%%%%%%%%%%%%%%%%%%
\appendix
\section{Continuous-time Poincar\'e-Bendixson theorem and stability of periodic orbits}\label{appendix}
Because HDS are a mixture of continuous and discrete dynamics {in this appendix we review the main results used in the work for} both continuous and discrete {dynamical} systems.

For a (smooth) manifold $\X$, let $f$ be a $C^1$ vector field on $\X$. Then we can study the flow induced by the vector field, 
\begin{equation}\label{eq:continuous}
\varphi_t:\X\rightarrow\X,\quad \frac{d}{dt}\varphi_t(x) = f(\varphi_t(x)).
\end{equation}
%Then, this lets us define the (continuous) $\omega_c$-limit set of the flow
%\begin{equation}
%\omega_c(x) := \left\{ y\in\X : \exists t_n\rightarrow\infty~s.t. \lim_{n\rightarrow\infty} \varphi_{t_n}(x)=y \right\}.
%\end{equation}
%Additionally, let us define the forward orbit of a point, assuming that the flow is complete (e.g. p. 62 in \cite{bloch2008nonholonomic}) as 
%\begin{equation}
%o_c^+(x) := \left\{ \varphi_t(x) : t\in\mathbb{R}^+\right\}.
%\end{equation}
Then, assuming the flow is complete (e.g. p. 62 in \cite{bloch2008nonholonomic}), let us define the (continuous) forward orbit and $\omega_c$-limit set. 
\begin{equation}
o_c^+(x) := \left\{ \varphi_t(x) : t\in\mathbb{R}^+\right\}.
\end{equation}
\begin{equation}
\omega_c(x) := \left\{ y\in\X : \exists t_n\rightarrow\infty~s.t. \lim_{n\rightarrow\infty} \varphi_{t_n}(x)=y \right\}.
\end{equation}
\begin{theorem}[\cite{perko1991differential}, p. 193]\label{th:perko}
	Assume that $\X\subset\mathbb{R}^n$. The set $\omega_c(x)$ is a closed subset of $\mathbb{R}^n$. Furthermore, if $o_c^+(x)$ is contained in a compact subset of $\mathbb{R}^n$, then $\omega_c(x)$ is a non-empty, connected and compact subset of $\mathbb{R}^n$.
\end{theorem}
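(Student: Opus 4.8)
The plan is to establish the four assertions in turn — closedness, non-emptiness, compactness, and connectedness — noting that the first three are almost immediate and mirror the hybrid arguments already given, while connectedness is the genuinely substantive claim.

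First I would prove that $\omega_c(x)$ is closed by exactly the diagonal argument used in Proposition \ref{th:closed}: given $p_n \to p$ with each $p_n \in \omega_c(x)$, pick for each $n$ a time $t_n \to \infty$ with $|\varphi_{t_n}(x) - p_n| < 1/n$, and conclude $\varphi_{t_n}(x) \to p$ by the triangle inequality, so $p \in \omega_c(x)$. Since that argument never uses hybridness, it transfers verbatim (indeed it is simpler here, as $t \mapsto \varphi_t(x)$ is genuinely continuous). For non-emptiness, assuming $o_c^+(x)$ lies in a compact set $K$, the sequence $\{\varphi_n(x)\}_{n \in \mathbb{N}} \subset K$ admits a convergent subsequence by Bolzano--Weierstrass, whose limit lies in $\omega_c(x)$. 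Compactness is then automatic, since $\omega_c(x) \subset \overline{o_c^+(x)} \subset K$ is a closed subset of a compact set.

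The main work is connectedness, which I would argue by contradiction. Suppose $\omega_c(x) = A \sqcup B$ with $A, B$ nonempty, closed, and disjoint; being closed subsets of the compact $K$, they are compact, and $d(A,B) =: \delta > 0$. Because both $A$ and $B$ meet $\omega_c(x)$, there are arbitrarily large times at which the trajectory lies within $\delta/3$ of $A$ and arbitrarily large times at which it lies within $\delta/3$ of $B$. The crucial point is that $t \mapsto \varphi_t(x)$ is continuous, so between a visit near $A$ and a subsequent visit near $B$ the trajectory must pass through the gap $G := \{ y : d(y,A) \geq \delta/3 \text{ and } d(y,B) \geq \delta/3 \}$; this produces a sequence of times $\tau_n \to \infty$ with $\varphi_{\tau_n}(x) \in G$. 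These points lie in the compact set $K \cap G$, so after passing to a subsequence they converge to some $z \in G$. By definition $z \in \omega_c(x) = A \sqcup B$, yet $z \in G$ forces $d(z,A), d(z,B) \geq \delta/3 > 0$, contradicting $z \in A \cup B$. Hence no such splitting exists and $\omega_c(x)$ is connected.

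I expect the connectedness step to be the only real obstacle, and it is instructive to see why: the argument rests entirely on the continuity of the flow $t \mapsto \varphi_t(x)$, which guarantees the trajectory cannot jump across the gap $G$. This is precisely the hypothesis that fails for hybrid flows $\varphi_t^H$ at impact times, which is why $\omega_H(x)$ need not be connected (cf.\ the example in Section \ref{sec:hybrid}) and why the hybrid Poincar\'e--Bendixson theorem (Theorem \ref{th:HPB}) must be proved by the return-map route rather than by adapting this classical argument.
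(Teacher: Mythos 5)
Your proof is correct. Note, however, that the paper does not actually prove this statement: it is quoted from Perko as background, and the only related argument in the paper is Proposition \ref{th:closed}, which establishes closedness, non-emptiness, and compactness for the \emph{hybrid} limit set by exactly the diagonal and Bolzano--Weierstrass arguments you give, but deliberately omits connectedness (which can fail for hybrid flows). So the first three parts of your proof coincide with the paper's treatment, and the connectedness argument is the genuinely new content you supply. That argument is the standard one and is sound: writing $\omega_c(x)=A\sqcup B$ with $A,B$ compact and $d(A,B)=\delta>0$, the intermediate value theorem applied to the continuous map $t\mapsto d(\varphi_t(x),A)$ between a visit within $\delta/3$ of $A$ and a later visit within $\delta/3$ of $B$ produces times $\tau_n\to\infty$ with $\varphi_{\tau_n}(x)$ in the closed gap $G$, and a subsequential limit then lies in $\omega_c(x)\cap G$, which is impossible. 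You also correctly place connectedness under the compactness hypothesis, where it belongs. One small inaccuracy in your closing remark: the example in Section \ref{sec:hybrid} illustrates failure of \emph{invariance} of $\omega_H(x)$ when (H.5) is dropped --- the limit set there is the interval $[0,1]\times\{0\}$, which is connected --- so it is not an example of a disconnected hybrid limit set; the correct moral is simply that the jump map destroys continuity of $t\mapsto\varphi_t^H(x)$, which is the single ingredient your connectedness argument relies on, and this is indeed why the hybrid statement (Proposition \ref{th:closed}) drops that conclusion.
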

Now, we can state the Poincar\'e-Bendixson Theorem for $\mathbb{R}^2$.
\begin{theorem}[Poincar\'e-Bendixson, \cite{perko1991differential}, p. 245]\label{th:PB}
Suppose that $f\in C^1(\X)$, where $\X$ is an open subset of $\mathbb{R}^2$, and that $o_c^+(x)$ is contained in a compact subset $F$ of $\X$. Then, if $\omega_c(x)$ contains no fixed points of $f$, $\omega_c(x)$ is a periodic orbit.
\end{theorem}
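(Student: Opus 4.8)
The plan is to build directly on the properties of $\omega_c(x)$ already recorded in Theorem \ref{th:perko}: under the stated hypotheses $\omega_c(x)$ is nonempty, compact, connected, and (as is standard, and as established for the hybrid case earlier) invariant under the flow. The heart of the matter is a monotonicity property of the intersections of a trajectory with a local transversal, which is genuinely two-dimensional and rests on the Jordan Curve Theorem. First I would fix the notion of a transversal: a small arc $\Sigma$ through a non-fixed point $q$ along which $f$ is nowhere tangent. Since $\omega_c(x)$ contains no fixed points of $f$, every point of $\omega_c(x)$ admits such a transversal, and the rectification (flow-box) theorem guarantees that nearby orbits cross $\Sigma$ cleanly and in a single consistent direction.

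The key lemma I would prove is the monotonicity of crossings: if a forward orbit meets $\Sigma$ at successive points $y_1, y_2, y_3, \dots$, then these points advance monotonically along $\Sigma$. The argument is to form the Jordan curve $J$ consisting of the orbit arc from $y_1$ to $y_2$ together with the sub-segment of $\Sigma$ between them. By the Jordan Curve Theorem, $J$ separates $\mathbb{R}^2$ into an inside and an outside, and because $f$ is transverse to $\Sigma$ in only one direction, the orbit cannot re-cross $J$ except forward through $\Sigma$; hence it is trapped on one side of $J$, which forces $y_3$ to lie beyond $y_2$ on $\Sigma$ in the same direction. A direct corollary is that $\omega_c(x)$ meets any transversal $\Sigma$ in at most one point, since the crossing points form a monotone sequence in the one-dimensional set $\Sigma$ and can therefore accumulate at only a single value.

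Next I would produce a periodic orbit inside $\omega_c(x)$. Pick $p\in\omega_c(x)$; by invariance together with Theorem \ref{th:perko} applied to $o_c^+(p)$, the set $\omega_c(p)$ is a nonempty subset of $\omega_c(x)$. Choose $q\in\omega_c(p)$ and a transversal $\Sigma$ through $q$. The orbit of $p$, which lies entirely in $\omega_c(x)$, meets $\Sigma$ in a sequence of points converging to $q$; but $\omega_c(x)$ meets $\Sigma$ in only one point by the corollary above, so all of these crossings must coincide with $q$. A trajectory that returns to the same point of a transversal is periodic, so the orbit $\gamma$ through $p$ is a periodic orbit contained in $\omega_c(x)$.

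Finally I would upgrade the inclusion $\gamma\subset\omega_c(x)$ to the equality $\gamma=\omega_c(x)$. If the inclusion were strict, then connectedness of $\omega_c(x)$ would supply points of $\omega_c(x)$ arbitrarily close to $\gamma$ but lying off $\gamma$; taking a transversal to $\gamma$ near such a point, these would furnish a second intersection of $\omega_c(x)$ with that transversal, contradicting the one-point property. Hence $\omega_c(x)=\gamma$ is a periodic orbit, as claimed. The step I expect to be the main obstacle is the monotonicity lemma, since it is the only place where planarity of $\X$ enters essentially and where the topological input of the Jordan Curve Theorem must be marshaled carefully; once it is in hand, the remainder is bookkeeping with $\omega$-limit sets and their invariance.
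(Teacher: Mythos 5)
Your outline is correct and is precisely the standard Jordan-curve/transversal argument for the classical Poincar\'e--Bendixson theorem; the paper itself states Theorem \ref{th:PB} as a quoted result and gives no proof, deferring entirely to \cite{perko1991differential}, where essentially this argument appears. The one step worth writing out carefully if you expand the sketch is the final equality $\gamma=\omega_c(x)$: you need the flow-box theorem to push a nearby point of $\omega_c(x)\setminus\gamma$ onto the transversal while staying inside the (invariant) limit set, and disjointness of distinct orbits to guarantee the resulting crossing is genuinely a second point of $\omega_c(x)\cap\Sigma$.
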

While Theorem \ref{th:PB} can assert the existence of periodic orbits, it says little about their stability. A {method} for  studying this behavior is the 
theory of  Poincar\'e maps. Let $\gamma$ be a periodic orbit of $\varphi_t$ and choose a hyper-surface, $\Sigma$ that is transverse to the flow at a point $x_0=\gamma(t_0)$ (that is $f(x_0)\cdot n(x_0)\ne0$ where $n(x_0)$ is the unit normal vector to $\Sigma$ at $x_0\in \Sigma$). Then, for points $y$ near $x_0$ in $\Sigma$, define $\tau(y)$ to be the time for the flow starting at $y$ to return to $\Sigma$. Then the Poincar\'e map (or first return map), is defined as
$$P(y)=\varphi_{\tau(y)}(y).$$
See \cite{guckenheimer2002nonlinear} and \cite{perko1991differential} for more information on Poincar\'e maps. If we can determine the derivative of $P$, then an orbit is stable if all the eigenvalues of $P'$ have modulus less than one (see \cite{bookHSD}, p. 219). In general it is not possible to analytically compute $P'$, but there are a helpful results in the literature (\cite{strogatz2014nonlinear}, p. 282). The next two results deal with the differentiability of $P$ as well as a way to compute $P'$ for planar systems.

\begin{theorem}[\cite{perko1991differential}, p. 212]
	Let $\X$ be an open subset of $\mathbb{R}^n$ and let $f\in C^1(\X)$. Suppose that $\varphi_t(x_0)$ is a periodic solution of \eqref{eq:continuous} of period $T$ and that the cycle 
	$$\Gamma=\left\{ x\in \mathbb{R}^n : x=\varphi_t(x), 0\leq t\leq T\right\}$$
	is contained in $\X$. Let $\Sigma$ be the hyperplane orthogonal to $\Gamma$ at $x_0$; i.e. let 
	$$\Sigma = \left\{ x\in\mathbb{R}^n : (x-x_0)\cdot f(x_0)=0\right\}.$$
	Then there is a $\delta>0$ and a unique function $\tau(x)$ defined and continuously differentiable for $x\in \mathcal{B}_\delta(x_0)$, such that $\tau(x_0)=T$ and
	$$\varphi_{\tau(x)}(x)\in\Sigma$$
	for all $x\in \mathcal{B}_\delta(x_0)$.
\end{theorem}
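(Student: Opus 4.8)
The plan is to reduce everything to a single scalar equation whose zeros encode the condition $\varphi_t(x)\in\Sigma$ and then solve it for the return time $t$ as a function of the base point $x$ via the implicit function theorem, exactly in the spirit of the proof of Theorem~\ref{thm:smooth}. I would define $F:\mathbb{R}\times\X\to\mathbb{R}$ by
$$F(t,x) = \bigl(\varphi_t(x)-x_0\bigr)\cdot f(x_0),$$
so that $F(t,x)=0$ holds precisely when $\varphi_t(x)$ lies on the hyperplane $\Sigma$. Since $\varphi_T(x_0)=x_0$ by periodicity of the orbit, we have $F(T,x_0)=0$, which furnishes the base point at which the implicit function theorem will be applied.

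First I would observe that $F$ is $C^1$: the flow map $(t,x)\mapsto\varphi_t(x)$ is $C^1$ by the smooth dependence on initial conditions (Theorem~1, Section~2.5 of \cite{perko1991differential}), and $F$ is its composition with the affine map $y\mapsto(y-x_0)\cdot f(x_0)$. The crucial step is then to differentiate in $t$ and evaluate at the base point,
$$\frac{\partial F}{\partial t}(T,x_0) = \frac{\partial}{\partial t}\varphi_t(x_0)\Big|_{t=T}\cdot f(x_0) = f\bigl(\varphi_T(x_0)\bigr)\cdot f(x_0) = \lVert f(x_0)\rVert^2.$$
Because $\Gamma$ is a nontrivial periodic orbit of positive period $T$, the point $x_0$ is not a fixed point of $f$, so $f(x_0)\ne0$ and hence $\partial F/\partial t(T,x_0)=\lVert f(x_0)\rVert^2>0$. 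This inequality is precisely the analytic expression of the transversality of the flow to $\Sigma$ at $x_0$, which is built into the choice of $\Sigma$ as the orthogonal complement of $f(x_0)$.

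With $F\in C^1$, a base zero $F(T,x_0)=0$, and a nonvanishing partial derivative $\partial F/\partial t(T,x_0)\ne0$, the implicit function theorem (Theorem~9.28 in \cite{rudin1976principles}) supplies a $\delta>0$ and a unique $C^1$ function $\tau:\mathcal{B}_\delta(x_0)\to\mathbb{R}$ with $\tau(x_0)=T$ and $F(\tau(x),x)=0$ for all $x\in\mathcal{B}_\delta(x_0)$; the latter identity is exactly the statement $\varphi_{\tau(x)}(x)\in\Sigma$, which completes the argument. The only point genuinely requiring care is the nonvanishing of $\partial F/\partial t$, and this rests entirely on knowing $f(x_0)\ne0$, i.e.\ that $x_0$ lies on a true cycle rather than at an equilibrium; the remaining steps are routine bookkeeping with the regularity of the flow and the standard implicit function theorem.
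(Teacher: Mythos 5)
Your proof is correct and follows the standard implicit-function-theorem argument (define a scalar function whose zero locus is the section, check the time-derivative is $\lVert f(x_0)\rVert^2\ne 0$, and solve for $\tau$), which is precisely the template the paper itself uses in its proof of the hybrid analogue, Theorem~\ref{thm:smooth}; the paper cites this continuous-time statement from \cite{perko1991differential} rather than reproving it, and the reference proof is the same as yours. No gaps.
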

\begin{theorem}[\cite{perko1991differential}, p. 216]\label{th:AA}
	Let $\X$ be an open subset of $\mathbb{R}^2$ and suppose that $f\in C^1(\X)$. Let $\gamma(t)$ be a periodic solution of \eqref{eq:continuous} of period $T$. Then the derivative of the Poincar\'e map $P(s)$ along a straight line $\Sigma$ normal to $\Gamma=\{x\in\mathbb{R}^2|x=\gamma(t)-\gamma(0),~t\in[0,T]\}$ at $x=0$ is given by
	\begin{equation}
	P'(0)=\exp \left(\int_0^T \, \nabla\cdot f(\gamma(t))\, dt\right).
	\end{equation}
\end{theorem}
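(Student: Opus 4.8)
The plan is to reduce the computation of $P'(0)$ to the monodromy matrix of the variational equation along $\gamma$ and then evaluate its determinant using the Abel--Liouville identity already recorded in Lemma \ref{perkodiv}. Write $x_0=\gamma(0)$, let $\Phi(t):=\frac{\partial}{\partial x}\varphi_t(x)\big|_{x=x_0}$ denote the state-transition matrix, which solves the variational equation $\dot{\Phi}(t)=Df(\gamma(t))\,\Phi(t)$ with $\Phi(0)=I$, and let $e$ be a unit vector spanning $\Sigma$; by normality of $\Sigma$ we have $\langle f(x_0),e\rangle=0$. The goal will be to show that $P'(0)$ equals the nontrivial Floquet multiplier of $\Phi(T)$, whose value is exactly $\det\Phi(T)$.

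First I would invoke the preceding theorem (\cite{perko1991differential}, p.\ 212) to obtain a $C^1$ return time $\tau(s)$ with $\tau(0)=T$, so that $P$ is differentiable and the defining relation can be written as $\varphi_{\tau(s)}(x_0+s\,e)=x_0+P(s)\,e$, where $P(s)$ is the signed arclength of the return point. Differentiating both sides at $s=0$, and using $\varphi_T(x_0)=x_0$, $\frac{\partial}{\partial t}\varphi_t(x_0)\big|_{t=T}=f(x_0)$, and $\frac{\partial}{\partial x}\varphi_T(x)\big|_{x=x_0}=\Phi(T)$, yields
\begin{equation*}
P'(0)\,e=\tau'(0)\,f(x_0)+\Phi(T)\,e.
\end{equation*}
Taking the inner product with $e$ and using $\langle f(x_0),e\rangle=0$ eliminates the unknown $\tau'(0)$ and produces the clean expression $P'(0)=\langle \Phi(T)\,e,\,e\rangle=e^{\top}\Phi(T)\,e$.

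Next I would identify this scalar with $\det\Phi(T)$. Because $\gamma$ is periodic, $\dot\gamma(t)=f(\gamma(t))$ is itself a solution of the variational equation, so $\Phi(T)f(x_0)=\dot\gamma(T)=\dot\gamma(0)=f(x_0)$; that is, $f(x_0)$ is an eigenvector of $\Phi(T)$ with the trivial Floquet multiplier $1$. In the (adapted, non-orthonormal) basis $\{f(x_0),e\}$ the monodromy matrix is therefore upper triangular,
\begin{equation*}
\Phi(T)=\begin{pmatrix} 1 & a \\ 0 & b \end{pmatrix},
\end{equation*}
whence $\det\Phi(T)=b$. On the other hand, writing $\Phi(T)\,e=a\,f(x_0)+b\,e$ and pairing with $e$ gives $e^{\top}\Phi(T)\,e=a\langle f(x_0),e\rangle+b\langle e,e\rangle=b$, again using $\langle f(x_0),e\rangle=0$ and $\lVert e\rVert=1$. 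Combining the two identities gives $P'(0)=b=\det\Phi(T)$, and Lemma \ref{perkodiv} evaluated at $t=T$ computes the right-hand side as $\exp\!\big(\int_0^T\nabla\cdot f(\gamma(t))\,dt\big)$, which is the claimed formula.

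The hard part will be the middle step: isolating $P'(0)$ and correctly identifying it with the \emph{nontrivial} multiplier rather than with some other entry of $\Phi(T)$. Both halves of that identification lean on the same two structural facts — that $\Sigma$ is chosen normal to the flow (forcing $e\perp f(x_0)$, which simultaneously kills the $\tau'(0)$ term and makes the $e$-diagonal entry coincide with $\det\Phi(T)$) and that the trivial multiplier $1$ lives in the tangent direction $f(x_0)$ (which triangularizes $\Phi(T)$ in the adapted basis). The one point demanding care is that $\{f(x_0),e\}$ need not be orthonormal, so I would be explicit that the matrix representation is in this adapted basis while the scalar $e^{\top}\Phi(T)e$ is a genuine inner product, and verify that the $\langle f(x_0),e\rangle=0$ cancellation makes the two computations of $b$ agree.
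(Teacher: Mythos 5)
Your proof is correct, but there is no internal proof to compare it against: the paper states Theorem~\ref{th:AA} as a quoted background result from Perko (p.~216) in the appendix and never proves it. Your argument is the standard monodromy-matrix proof of that result, and every step checks out: the return relation $\varphi_{\tau(s)}(x_0+s e)=x_0+P(s)e$ differentiated at $s=0$; the orthogonality $\langle f(x_0),e\rangle=0$ (valid because $\Sigma$ is normal to the orbit) eliminating the unknown $\tau'(0)$; the identity $\Phi(T)f(x_0)=f(x_0)$ coming from $\dot\gamma$ solving the variational equation, which triangularizes $\Phi(T)$ in the basis $\{f(x_0),e\}$; the basis-independence of the determinant giving $\det\Phi(T)=b$; and Lemma~\ref{perkodiv} evaluating $\det\Phi(T)$. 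The only hypothesis you use silently is $f(x_0)\neq 0$, which holds since $\gamma$ is a nonconstant periodic solution and is needed for $\{f(x_0),e\}$ to be a basis. It is worth contrasting your route with the paper's proof of its hybrid generalization, Theorem~\ref{th:stability}, which is the closest internal counterpart: there the section $S$ is imposed by the impact surface and cannot be chosen normal to the flow, so the orthogonality trick that kills $\tau'(0)$ and makes $e^{\top}\Phi(T)e=\det\Phi(T)$ is unavailable. Instead the paper extracts the relevant component by comparing areas of parallelograms, $V(f(y),\delta y)$ versus its image under the linearized flow, which is what produces the extra geometric factors $\lVert f(y)\rVert/\lVert f(x)\rVert$ and $\sin\alpha/\sin\theta$ in equation~\eqref{eq:finally}; in your setting the orbit starts and ends at the same point and the section is normal, so both factors equal $1$ and the two computations agree. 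Your argument is thus the clean special case, while the paper's area argument is the version that survives when the normality assumption is dropped and an impact map $\Delta$ is inserted.
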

This gives us a straightforward method for finding the stability of a periodic orbit for planar systems. The problem is that when going to higher dimensions, we can no longer systematically find a derivative of $P$. However, we can still get necessary conditions for stability.
\begin{theorem}[\cite{perko1991differential}, p. 230]
	Let $f\in C^1(\X)$ where $\X$ is an open subset of $\mathbb{R}^n$ containing a periodic orbit $\gamma(t)$ of \eqref{eq:continuous} of period $T$. Then, $\gamma(t)$ is not asymptotically stable unless
	\begin{equation}
	\int_0^T\, \nabla\cdot f(\gamma(t))\, dt \leq 0.
	\end{equation}
\end{theorem}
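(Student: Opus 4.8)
The plan is to reduce the statement, via Floquet theory, to the Liouville identity already recorded in Lemma \ref{perkodiv}, and to argue in the contrapositive direction: assuming that $\gamma$ is (orbitally) asymptotically stable, I will deduce that $\int_0^T \nabla\cdot f(\gamma(t))\,dt \le 0$. First I would linearize the flow along the orbit. Writing $x_0=\gamma(0)$, the monodromy matrix $M:=D\varphi_T(x_0)$ governs the evolution of nearby trajectories, and its eigenvalues $\lambda_1,\dots,\lambda_n$ are the characteristic (Floquet) multipliers of $\gamma$. Differentiating the semigroup identity $\varphi_{t+s}(x_0)=\varphi_t(\varphi_s(x_0))$ in $s$ at $s=0$ gives $D\varphi_t(x_0)\,f(x_0)=f(\varphi_t(x_0))$, so at $t=T$ we obtain $M f(x_0)=f(x_0)$; thus $\lambda_1=1$ is always a multiplier, with the flow direction $f(x_0)$ as its eigenvector.

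Next I would invoke Lemma \ref{perkodiv} at time $t=T$, which yields
\begin{equation*}
\det M = \exp\left(\int_0^T \nabla\cdot f(\gamma(t))\,dt\right).
\end{equation*}
Since $\lambda_1=1$, the product of the remaining multipliers equals this determinant,
\begin{equation*}
\prod_{i=2}^n \lambda_i = \det M = \exp\left(\int_0^T \nabla\cdot f(\gamma(t))\,dt\right),
\end{equation*}
a strictly positive real number, and hence equal to $\prod_{i=2}^n|\lambda_i|$. The final ingredient is the identification of $\lambda_2,\dots,\lambda_n$ with the eigenvalues of the Poincar\'e return map: choosing a hypersurface $\Sigma$ transverse to $\gamma$ at $x_0$, whose existence and smoothness is provided by the transverse-section construction recalled above (the continuous analogue of Theorem \ref{thm:smooth}), the derivative $DP(x_0)$ acts on the $(n-1)$-dimensional tangent space of $\Sigma$ with eigenvalues exactly $\lambda_2,\dots,\lambda_n$. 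Orbital asymptotic stability of $\gamma$ is equivalent to asymptotic stability of $x_0$ as a fixed point of $P$, for which a necessary condition is that no eigenvalue of $DP(x_0)$ exceed $1$ in modulus, i.e. $|\lambda_i|\le 1$ for all $i\ge 2$. Therefore $\prod_{i=2}^n|\lambda_i|\le 1$, whence $\exp\bigl(\int_0^T \nabla\cdot f(\gamma(t))\,dt\bigr)\le 1$, and taking logarithms produces the asserted inequality.

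I expect the Floquet-theoretic step to be the main obstacle, specifically the clean identification of the nontrivial multipliers with the spectrum of $DP(x_0)$, together with the standard but not entirely trivial fact that asymptotic stability of a fixed point of a $C^1$ map forces its linearization to have spectral radius at most $1$. I would also emphasize a subtlety governing the shape of the conclusion: asymptotic stability supplies only the non-strict necessary bound $|\lambda_i|\le 1$, not strict inequality, which is exactly why the theorem concludes with $\int_0^T \nabla\cdot f \le 0$ rather than strict negativity.
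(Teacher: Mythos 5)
Your argument is correct and is essentially the standard proof of this result (the paper itself states the theorem without proof, citing Perko p.~230, and Perko's argument is exactly this Floquet-multiplier computation): the trivial multiplier $1$ in the flow direction, Liouville's identity from Lemma \ref{perkodiv} for $\det D\varphi_T(x_0)$, identification of the remaining multipliers with the spectrum of $DP(x_0)$, and the necessary spectral bound $|\lambda_i|\le 1$ from asymptotic stability. Your closing remark about why only the non-strict inequality can be concluded is also exactly right.
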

%In the next three sections, we attempt to prove analogues of the theorems in this section for hybrid dynamical systems.

%\bibliography{notesbib}{}
%\bibliography{PoincareBendixonTheorem-HDSs-18Dec2017}{}
%\bibliographystyle{plain}

\end{document}